\newcommand{\etal}{\textit{et al.}}
\newcommand{\Bigabs}[1]{\Bigl\vert #1 \Bigr\vert}
\newcommand{\norm}[1]{\left\Vert #1 \right\Vert}
\newcommand{\Z}{\mathbb{Z}}
\newcommand{\R}{\mathbb{R}}
\newcommand{\angles}[1]{\langle #1 \rangle}
\DeclareMathOperator{\supp}{supp}
\newtheorem{theorem}{Theorem}
\newtheorem{lemma}{Lemma}
\theoremstyle{definition}
\theoremstyle{remark}
\newtheorem{remark}{Remark}
\title{Long-time existence for a Whitham--Boussinesq system
	in two dimensions}
 \author[A. Tesfahun]{Achenef Tesfahun}
\address{Department of Mathematics \\
Nazarbayev University \\
Qabanbai Batyr Avenue 53 \\
010000 Nur-Sultan \\
Republic of Kazakhstan}
\email{achenef@gmail.com}
 \keywords{ Surface waves, Witham-Boussinesq systems, Long-time existence, dispersive estimates}
\subjclass[2010]{5Q53, 35Q35, 76B15, 35A01, 76B03}
\begin{document}

\begin{abstract} 
This paper is concerned with a two dimensional Whitham–Boussinesq system modeling surface waves of an inviscid incompressible fluid layer. We prove that the associated Cauchy problem is well-posed for initial data of low regularity, with existence time of scale $\mathcal O\left(  \mu^{3/2-} \epsilon^{-2+}\right)$, where $\mu$ and $\epsilon$ are small parameters related to the level of dispersion and nonlinearity, respectively. In particular, in the KdV regime $\{\mu \sim \epsilon $\}, the existence time is of order $ \epsilon^{-1/2}$.
The main ingredients in the proof are frequency loacalised dispersive estimates and bilinear Strichartz estimates that depend on the parameter $\mu$.

\end{abstract}

\maketitle
\section{Introduction}

We consider the Cauchy problem for Whitham--Boussinesq system

\begin{equation}
\label{wt2d}
\left\{
\begin{aligned}
  \partial_t \eta +  \nabla \cdot  \mathbf v & = -\epsilon K_\mu\nabla \cdot (\eta \mathbf v),
  \\
  \partial_t \mathbf v + K_\mu\nabla \eta  &=   - \epsilon  K_\mu \nabla ( |\mathbf v |^2/2),
  \\
  (\eta, \mathbf v)(0)&=( \eta_0, \mathbf v_0),
\end{aligned}
\right.
\end{equation} 
where $\eta: \R^{2+1}\mapsto \R$, $ \mathbf v:  \R^{2+1}\mapsto  \R^2$ is a curl--free vector field, i.e., $
 	\nabla \times  \mathbf v   = 0$, and 
\begin{equation*}
	K_\mu:=K_\mu(D)= \frac{\tanh( \sqrt \mu|D|)}{\sqrt \mu |D|}\quad \text{with}  \ D=-i\nabla.
\end{equation*}

The system \eqref{wt2d}
describes the evolution with time of surface waves of
a liquid layer in the three dimensional physical space. The 
 variables $\eta$ and $\mathbf v$
denote the surface elevation
and the fluid velocity, respectively. 
The shallowness parameter $\mu$ and the nonlinearity parameter $\epsilon$ are defined by 
 $$\mu=( h/\lambda)^2, \qquad \epsilon=a /h,$$
 where $h$ denotes the mean depth of the fluid layer,  $a$ is a typical amplitude of the wave, $\lambda$ a typical horizontal wavelength.
 
  For $\mu \ll 1 $, one has formally 
$$
K_\mu(D)= 1 +\frac{\mu}{3} \Delta + \mathcal O (\mu^2),
$$
and hence one can write \eqref{wt2d}
as
\begin{equation*}
\left\{
\begin{aligned}
\partial_t \eta +  \nabla \cdot  \mathbf v & = -\epsilon \nabla \cdot (\eta \mathbf v) + \mathcal O (  \mu^2+\mu \epsilon),
  \\
  \partial_t \mathbf v + \nabla \eta +  \frac{\mu}{3} \Delta\nabla \eta   &=   - \epsilon \nabla ( |\mathbf v |^2/2) +  \mathcal O (  \mu^2+\mu \epsilon),
\end{aligned}
\right.
\end{equation*} 
which is a perturbation of the Boussinesq system
\begin{equation}\label{wtbq-approx}
\left\{
\begin{aligned}
\partial_t \eta +  \nabla \cdot  \mathbf v & = -\epsilon \nabla \cdot (\eta \mathbf v) ,
  \\
  \partial_t \mathbf v + \nabla \eta +  \frac{\mu}{3} \Delta\nabla \eta   &=   - \epsilon \nabla ( |\mathbf v |^2/2) .
\end{aligned}
\right.
\end{equation} 
The later system is a particular member of the (abcd) family of Boussinesq systems derived in \cite{BCS2002} as asymptotic models for water waves in the Boussinesq regime. Unfortunately, this system is linearlity ill-posed and thus cannot be used as a relevant water wave model. The system \eqref{wt2d} can be viewed as a regularization of this ill-posed system.

The system \eqref{wt2d} enjoys the Hamiltonian structure
\[
	\partial_t (\eta,  \mathbf  v)^T = \mathcal J_\mu \nabla \mathcal H_\mu(\eta ,  \mathbf v)
\]
with the skew-adjoint matrix
\[
	\mathcal J_\mu
	=
	\begin{pmatrix}
		0 & - K_\mu \partial_{x_1} & - K_\mu \partial_{x_2}
		\\
		- K_\mu \partial_{x_1} & 0 & 0
		\\
		- K_\mu \partial_{x_2} & 0 & 0
	\end{pmatrix}
	.
\]
This in particular
guarantees conservation of the energy functional
\begin{equation}
\label{Hamiltonian2}
	\mathcal E_\mu(\eta, \mathbf v)  = \frac 12 \int_{\R^2}
	\left(	
		\eta^2 + \left| K_\mu^{-\frac12} \mathbf v \right|^2
		+ \eta |\mathbf v |^2
	\right)
	dx
	.
\end{equation}

The one dimensional version of \eqref{wt2d} that
describes the evolution with time of surface waves of
a liquid layer in the two dimensional physical space
is written as 
\begin{equation}
\label{wt1d}
\left\{
\begin{aligned}
  \partial_t \eta +  \partial_x v & = -\epsilon  L_\mu \partial_x(\eta v),
  \\
  \partial_t v + L_\mu \partial_x \eta  &=   - \epsilon   L_\mu  \partial_x (  v ^2/2),
  \\
  (\eta, v)(0)&=( \eta_0, v_0) ,
\end{aligned}
\right.
\end{equation} 
where $\eta, v: \R^{2+1}\mapsto \R$ and 
\begin{equation*}
	L_\mu :=L_\mu (D)= \frac{\tanh( \sqrt \mu D)}{\sqrt \mu D}\quad \text{with}  \ D=-i\partial_x.
\end{equation*}
 For more details on the study of \eqref{wt2d}, \eqref{wt1d} or related equations, we refer the reader to \cite{Cr18, Dy19, DDK19, DST20, DDT22, T22}.

Recently,  together with Dinvay and Selberg \cite{DST20} we studied low regularity well-posedness of the Cauchy problems \eqref{wt2d} and \eqref{wt1d} for $\mu=\epsilon=1$. In particular,  we proved that \eqref{wt1d} is globally well-posed for initial data $( \eta_0, v_0)$ that is small in the $L^2(\R) \times H^{1/2} ( \R )$--norm. Moreover,  we showed that \eqref{wt2d} is locally well-posed for initial data $( \eta_0, \mathbf v_0) \in H^s(\R^2) \times H^{s+1/2} ( \R^2 ) \times H^{s+1/2} ( \R^2 )$ with $s>1/4$.

In the present paper, we are interested in the problem of long-time existence of solution to \eqref{wt2d} assuming that the nonlinearity parameter $\epsilon$ is sufficiently small.  In particular, by exploiting the dispersive nature of the system we prove that  \eqref{wt2d} is well-posed with existence time of scale $\mathcal O\left(  \mu^{3/2-} \epsilon^{-2+}\right)$ if $( \eta_0, \mathbf v_0) \in H^s(\R^2) \times H^{s+1/2} ( \R^2 ) \times H^{s+1/2} ( \R^2 )$ with $s>1/4$.
This in particular recovers the local well-posedness result in \cite[Theorem 2]{DST20} for $\mu=\epsilon= 1$.

There has been several studies by J-C. Saut \etal \ \cite{LPS14,  SX19, SX20, SX12,   SWX15}  (see also  \cite{ KLPS18, MP22, E21})
 regarding long-time existence of solutions for Whitham-Boussinesq type equations with initial data of size $\mathcal O\left( 1\right)$ in some Sobolev norm, where the time of existence depends on parameter $\epsilon$. In \cite{SX20, SX12, SWX15} the analysis is based only on symmetrization and energy techniques, and do not exploit the dispersive properties of the equations. The time of existence obtained for the equations involved is at most of scale $\mathcal O\left( 1/\epsilon \right)$ in the KdV regime $\{\mu \sim \epsilon $\}, but the space of resolutions are smaller. 
On the other hand, in \cite{LPS14} and \cite{SX19} the dispersive nature of the systems involved is used to study the long-time existence problem.
For instance, in \cite{LPS14} using the dispersive method the authors proved that the two dimensional dispersive Boussinesq system of the form
\begin{equation}
\label{bousq}
\left\{
\begin{aligned}
  \partial_t \eta +  \nabla \cdot (1+ \epsilon \Delta) \mathbf v & = - \epsilon  \nabla\cdot  (\eta \mathbf v),
  \\
  \partial_t \mathbf v + \nabla (1 +\epsilon \Delta) \eta  &=   -  \epsilon \nabla ( |\mathbf v |^2/2),
  \\
   (\eta, \mathbf v)(0)&=( \eta_0, \mathbf v_0) \in[ H^s(\R^2)]^3 
\end{aligned}
\right.
\end{equation} 
  is locally well-posed with existence time of scale $\mathcal O\left( 1/\sqrt \epsilon \right)$ whenever $s>3/2$.

Our main result is as follows.
\begin{theorem}
\label{theorem2d}
	Let $s > 1/4$ and $ \mu, \epsilon \in (0, 1] $. Suppose that $v_0$ is curl free, i.e., $\nabla \times \mathbf v_0 = 0$ and 
the initial data has size $$
	\left(	\lVert \eta_0 \rVert _{H^s(\R^2)}
		+ \lVert \mathbf v_0 \rVert _{ (H^{s + 1/2} (\R^2))^2 } \right) \sim \mathcal D_0.
	$$
	Then there is a solution
	$$
		( \eta, \mathbf v ) \in C \left([0, T];
		H^s(\mathbb{R}^2)
		\times
		\left( H^{s + 1/2} \left( \mathbb R^2 \right) \right) ^2
		\right)
$$
	of the Cauchy problem \eqref{wt2d}
	 with  existence time $T $ given by \footnote{We use the notation $a\pm:=a \pm \delta$ for sufficiently small $\delta>0$. For any positive numbers $A$ and $B$, the notation $A \lesssim B$ stands for $A \le  CB$, where $C$ is a positive constant that is independent $\mu$, $\epsilon$ or  $T$. Moreover, we denote $A \sim B$  when $A\lesssim B$ and $B \lesssim A$. }
	$$
	T \sim  \mathcal D_0^{-2+} \mu^{3/2-} \epsilon^{-2+}   .$$
	Moreover, the solution is unique in some subspace of the above solution space and the solution depends continuously on the initial data.
\end{theorem}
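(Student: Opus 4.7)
The strategy is to diagonalize the linear part of \eqref{wt2d} into a single complex scalar dispersive equation, and then close an $H^s$ energy estimate using frequency-localized dispersive and bilinear Strichartz estimates whose constants carefully track the shallowness parameter $\mu$. First, I would exploit the curl-free condition by setting $\psi:=|D|^{-1}\nabla\cdot\mathbf v$, so that $\mathbf v = -R\psi$ with $R=\nabla/|D|$ the vector of Riesz transforms. In the variables $(\eta,\psi)$ the linear part of \eqref{wt2d} has eigenvalues $\pm i\omega_\mu(D)$ with $\omega_\mu(\xi)=|\xi|\sqrt{K_\mu(\xi)}$. Setting $u:=\eta - iK_\mu^{-1/2}\psi$, I obtain a scalar equation
\begin{equation*}
(\partial_t + i\omega_\mu(D))u = \epsilon\,\mathcal N(u,\bar u),
\end{equation*}
whose quadratic nonlinearity $\mathcal N$ involves $K_\mu$ and Riesz transforms, and whose $H^s$-norm is equivalent, uniformly in $\mu\in(0,1]$, to the data norm in the theorem.

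Next I would derive dispersive estimates for $e^{-it\omega_\mu(D)}$ via stationary phase. Two regimes appear: for $|\xi|\lesssim \mu^{-1/2}$ the symbol is close to pure transport and the Hessian of $\omega_\mu$ degenerates like a power of $\mu$; for $|\xi|\gtrsim\mu^{-1/2}$ it behaves like $\mu^{-1/4}|\xi|^{1/2}$, with a different (frequency-dependent) degeneration. On each Littlewood--Paley band $|\xi|\sim N$, stationary phase applied to the kernel of $P_N e^{-it\omega_\mu(D)}$ yields
\begin{equation*}
\bignorm{P_N e^{-it\omega_\mu(D)}f}_{L^\infty(\R^2)} \lesssim c(N,\mu)\,|t|^{-1}\,\bignorm{P_N f}_{L^1(\R^2)},
\end{equation*}
where $c(N,\mu)$ is determined by the size of the Hessian determinant on the support. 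Interpolation with the trivial $L^2$ bound and a $TT^*$ argument give frequency-localized Strichartz estimates, and exploiting transversality between wave packets with well-separated frequencies produces a bilinear Strichartz estimate for products $P_{N_1}u\cdot P_{N_2}u$ with $N_1 \gtrsim N_2$, both constants retaining sharp $\mu$-dependence.

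With these tools in hand I would close the $H^s$ energy estimate: differentiating $\|u\|_{H^s}^2$ and symmetrizing the nonlinearity reduces the problem to controlling trilinear expressions, which I decompose by Littlewood--Paley into high$\times$high, high$\times$low, and resonant interactions. In each case the key factor is a product of two frequency-localized pieces, to which the bilinear Strichartz estimate of the previous step applies; a H\"older and Sobolev embedding argument (valid because $s>1/4$ in two space dimensions) then produces
\begin{equation*}
\|u\|_{L^\infty_T H^s}^2 \le C\mathcal D_0^2 + C\epsilon\, T^{1/2+}\mu^{-3/4+}\|u\|_{L^\infty_T H^s}^3 .
\end{equation*}
A standard continuity/bootstrap argument closes on the time scale $T\sim \mathcal D_0^{-2+}\mu^{3/2-}\epsilon^{-2+}$, matching the statement of Theorem~\ref{theorem2d}. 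Uniqueness and continuous dependence follow from an analogous estimate applied to the difference of two solutions, measured in a slightly weakened auxiliary norm incorporating the bilinear Strichartz space.

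The main obstacle I expect is obtaining the optimal $\mu$-power in the bilinear Strichartz estimate, because the dispersion $\omega_\mu$ degenerates precisely in the shallow-water limit $\mu\to 0$ (the formal regime where \eqref{wt2d} would reduce to the linearly ill-posed Boussinesq system \eqref{wtbq-approx}). A naive use of the linear dispersive estimate alone loses too many powers of $\mu$; only by combining sharp transversality between frequency-separated waves with the Littlewood--Paley decomposition of $\mathcal N$ does one recover the full factor $\mu^{3/2-}$ in the existence time. Any slack in this step would degrade the final time of existence below the stated scale.
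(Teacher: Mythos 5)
Your overall strategy coincides with the paper's: diagonalize via $u=\eta-iK_\mu^{-1/2}|D|^{-1}\nabla\cdot\mathbf v$ (the paper uses the pair $u_\pm$, but since $u_-=\overline{u_+}$ this is the same reduction), prove a frequency-localized $L^1\to L^\infty$ decay estimate with constant $\min\bigl(\lambda^2,\ \mu^{-1/2}\angles{\sqrt\mu\lambda}^{3/2}t^{-1}\bigr)$ by stationary phase, pass to localized Strichartz estimates by $TT^*$, and close at $s>1/4$ with the loss $T^{1/2+}\mu^{-3/4+}$ per nonlinear iteration, which yields exactly $T\sim\mathcal D_0^{-2+}\mu^{3/2-}\epsilon^{-2+}$. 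Your numerology matches the paper's Lemma \ref{lm-keybiest}.

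However, there are two points where your proposal departs from what is actually needed, and one of them is a genuine gap. First, the closing inequality
$\norm{u}_{L^\infty_TH^s}^2\le C\mathcal D_0^2+C\epsilon T^{1/2+}\mu^{-3/4+}\norm{u}_{L^\infty_TH^s}^3$
cannot be obtained from an energy identity plus Strichartz estimates alone: the Strichartz bounds apply to the free evolution and to Duhamel integrals, not to an arbitrary function with finite $L^\infty_TH^s$ norm, so the trilinear terms cannot be controlled by $\norm{u}_{L^\infty_TH^s}$ only. One must run the argument in a norm that also carries the solution's spacetime integrability — the paper's $X^s_T$ combines $\norm{P_\lambda u}_{L^\infty_TL^2_x}$ with a $\mu$-weighted $\norm{P_\lambda u}_{L^q_TL^r_x}$ at each dyadic scale, and contracts the Duhamel map directly in that space. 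Your bootstrap would have to be reformulated on such an augmented norm; as written the a priori estimate is not self-consistent. Second, the ``sharp transversality / bilinear Strichartz'' ingredient you identify as the main obstacle is neither proved in your proposal nor needed: the paper obtains the full $\mu^{3/2-}$ factor by putting the low-frequency factor in $L^q_TL^r_x$ via the \emph{linear} localized Strichartz estimate, the high-frequency factor in $L^\infty_TL^2_x$, and using H\"older and Bernstein, together with the smoothing $\angles{\sqrt\mu\lambda}^{-1/2}$ coming from $\sqrt{K_\mu}$ in the nonlinearity. If your argument genuinely relied on a transversality-based bilinear estimate with sharp $\mu$-dependence for the radial phase $m_\mu$, that would be a substantial unproven lemma; fortunately the simpler product-of-linear-estimates route already suffices.
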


\begin{remark} From Theorem \ref{theorem2d} we deduce the following:
\begin{itemize}
\item In the regime $\{  \epsilon \ll \mu \sim 1\}$,  the solution exists on a larger time scale of order $\epsilon^{-2+}$. 
This is due to the presence of weak nonlinearities $\{ \epsilon \ll 1\}$, which is also regularized by the operator
$$K_\mu(D) \sim \angles{ \sqrt \mu D}^{-1} \sim \angles{  D}^{-1}  \quad \text{for} \ \ \mu\sim 1.$$

\item 
 In the regime $\{ \mu \ll  \epsilon \sim 1\}$, the time of existence is of order $\mu^{3/2-}$ and hence shrinks to $0$ as $\mu \rightarrow 0+$. 
This is due to the presence of strong nonlinearities $\{ \epsilon \sim 1\}$ which is not regularized, since  $$K_\mu(D) \sim  1-  \frac \mu3|D|^2  \quad \text{for} \ \ \mu\ll 1.$$
In fact, in this regime the system \eqref{wt2d} is a perturbation of the ill-posed system \eqref{wtbq-approx}.

\item 
Finally, in the KdV regime $\{\mu \sim \epsilon $\}, the existence time is of order $ \epsilon^{-1/2}$.
\end{itemize}

\end{remark}

In what follows we diagonalize \eqref{wt2d}, and then reduce Theorem \ref{theorem2d} to Theorem \ref{theorem2d-reduc} below which corresponds to the diagonalized system. To this end, we define
\begin{align*}
u_\pm=\frac{\eta \mp i K_\mu^{-1/2} \mathbf R\cdot \mathbf v}{2 },
\end{align*}
where $(\eta, \mathbf v)$ is a solution to \eqref{wt2d}, and 
 $\mathbf R=|D|^{-1}\nabla $ is the Riesz transform. Then we have
$$
\eta=u_++u_- \quad \mathbf v=-i \sqrt K_\mu \mathbf R (u_+- u_-).
$$
Set $$m_\mu(D):= |D|  \sqrt{K_\mu(D)}.$$
The system \eqref{wt2d} therefore transforms to
\begin{equation}
\label{wt2d-transf}
\left\{
\begin{aligned}
 (i\partial_t\mp m_\mu(D) ) u_\pm
&= \epsilon  \mathcal N^\pm_\mu(u_+, u_-) ,
\\
u_\pm (0)&= f_\pm,
\end{aligned}
\right.
\end{equation}
where
\begin{equation*}
 f_\pm =\frac{\eta_0 \mp i K_\mu^{-1/2} \mathbf  R\cdot \mathbf v_0}{2 } 
\end{equation*}
and 
\begin{equation}
\label{wtnonlin}
\begin{split}
\mathcal N^\pm_\mu(u_+, u_-)  &=   2^{-1}|D| K_\mu\mathbf R \cdot \left\{   (u_++u_-)    \mathbf R  \sqrt{K_\mu}(u_+ - u_-)\right\} 
\\
& \qquad \pm    4^{-1}|D|  \sqrt{K_\mu} \left|  \mathbf R \sqrt{K_\mu} (u_+ - u_-)\right|^2.
\end{split}
\end{equation}

Thus, Theorem \ref{theorem2d} redues to the following:
\begin{theorem}
\label{theorem2d-reduc}
	Let $s > 1/4$, $ \mu, \epsilon \in (0, 1] $, and that the initial data has size
	$$
		\sum_\pm \lVert f_\pm \rVert _{H^s}\sim\mathcal D_0.
	$$
	Then there exists a solution
	$$
		(u_+, u_-)\in C \left([0, T];
		H^s(\mathbb{R}^2)\times H^s(\mathbb{R}^2)
		\right)
$$
	of the Cauchy problem \eqref{wt2d-transf}--\eqref{wtnonlin} with existence time $T$ as in  Theorem \ref{theorem2d}.
	Moreover, the solution is unique in some subspace of 
	$ C \left([0, T];
		H^s(\mathbb{R}^2)\times H^s(\mathbb{R}^2)
		\right)$ and the solution depends continuously on the initial data.
\end{theorem}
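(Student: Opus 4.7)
The plan is to run a Picard iteration on the Duhamel formulation
\[
u_\pm(t) \;=\; e^{\mp i t\,m_\mu(D)} f_\pm \;-\; i\epsilon \int_0^t e^{\mp i (t-\tau)\,m_\mu(D)}\, \mathcal N^\pm_\mu(u_+,u_-)(\tau)\, d\tau
\]
in a resolution space $X_T$ that supplements $L^\infty_t H^s_x$ with a family of $\mu$-dependent Strichartz and bilinear Strichartz norms tailored to the symbol $m_\mu(\xi)=|\xi|\sqrt{K_\mu(\xi)}$. The existence time will be read off from the contraction estimate after all $\mu$, $\epsilon$, and $T$ factors are tracked.

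The first step is a careful analysis of the dispersion. A Littlewood--Paley decomposition splits the problem at the threshold $|\xi|\sim \mu^{-1/2}$: in the shallow regime $|\xi|\ll \mu^{-1/2}$ one has $m_\mu(\xi)\approx |\xi|(1-\mu|\xi|^2/6)$ (KdV-type dispersion), while in the deep regime $|\xi|\gg \mu^{-1/2}$ one has $m_\mu(\xi)\approx \mu^{-1/4}|\xi|^{1/2}$ (gravity water-wave / half-wave dispersion). In both regimes the Hessian of $m_\mu$ is nondegenerate on dyadic shells, so a stationary-phase / van der Corput computation gives a frequency-localized dispersive decay for $e^{\mp i t\,m_\mu} P_N$ with a sharp $(\mu,N)$-dependent weight. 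A $TT^*$ argument, followed by a transversality analysis of the characteristic surfaces $\tau=\pm m_\mu(\xi)$, then upgrades this to the linear and bilinear Strichartz estimates underpinning $X_T$.

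The second step is the nonlinear estimate. The operator $|D|\sqrt{K_\mu}$ appearing in $\mathcal N^\pm_\mu$ costs a half-derivative at high frequencies (it behaves like $\mu^{-1/4}|D|^{1/2}$ there), so a pure energy estimate cannot reach $s>1/4$. I would decompose $\mathcal N^\pm_\mu(u_+,u_-)$ by a paraproduct into high--high to low, high--low, and low--low pieces, and absorb the half-derivative loss via the bilinear Strichartz bound on products $(e^{\mp i t m_\mu}P_{N_1}f)(e^{\mp i t m_\mu}P_{N_2}g)$ when $N_1,N_2$ are separated. The exponent $s>1/4$ is precisely the one at which the dyadic sum over such interactions becomes summable, matching the known 2D threshold for quadratic half-wave type equations. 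Each piece must be re-summed with explicit $\mu$ weights drawn from both $K_\mu$ and the bilinear gain.

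Collecting the bounds yields an estimate of the form
\[
\bigl\|\mathcal N^\pm_\mu(u_+,u_-)\bigr\|_{X_T^{*}} \;\lesssim\; \mu^{-3/2+}\,T^{1-}\,\bigl(\|u_+\|_{X_T}+\|u_-\|_{X_T}\bigr)^{2},
\]
so that the contraction condition $\epsilon\cdot \mu^{-3/2+}\,T^{1-}\,\mathcal D_0 \lesssim 1$ inverts to the announced $T\sim \mathcal D_0^{-2+}\mu^{3/2-}\epsilon^{-2+}$. Uniqueness in the natural subspace of $X_T$ and Lipschitz continuous dependence follow by applying the same bilinear estimates to differences. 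The step I expect to be hardest is extracting the sharp $\mu^{3/2}$ gain uniformly across the transition frequency $|\xi|\sim \mu^{-1/2}$, where the symbol switches dispersive regime and the $\pm$ characteristic surfaces interact in a borderline transversal manner; getting the paraproduct summation to behave in that intermediate zone without losing a power of $\mu$ is the main bookkeeping obstacle.
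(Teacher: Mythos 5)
Your overall architecture --- Duhamel iteration, a frequency-localized dispersive estimate that sees the transition at $|\xi|\sim\mu^{-1/2}$, a $TT^*$ upgrade to $\mu$-dependent Strichartz norms, and a dyadic case analysis of the quadratic interactions closed by contraction --- is the same as the paper's. But your final bookkeeping contains a genuine error: the contraction condition $\epsilon\,\mu^{-3/2+}\,T^{1-}\,\mathcal D_0\lesssim 1$ does \emph{not} invert to $T\sim \mathcal D_0^{-2+}\mu^{3/2-}\epsilon^{-2+}$. Substituting the announced $T$ back in gives $\epsilon\,\mu^{-3/2}\cdot\mathcal D_0^{-2}\mu^{3/2}\epsilon^{-2}\cdot\mathcal D_0=(\epsilon\,\mathcal D_0)^{-1}$, which is large for small $\epsilon$, not $\lesssim 1$; your condition only yields $T\sim(\mathcal D_0\,\epsilon)^{-1-}\mu^{3/2+}$. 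To reach $\epsilon^{-2+}$ the nonlinear estimate must cost only $T^{1/2+}\mu^{-3/4+}$ per application --- roughly the square root of what you wrote. The paper achieves exactly this (Lemma \ref{lm-keybiest}): the Duhamel term is measured in $L^1_TL^2_x\le T^{1/2}\|\cdot\|_{L^2_{T,x}}$, and the $L^2_{T,x}$ norm of each dyadic product is bounded by H\"older with only \emph{one} factor in a near-$L^2_TL^\infty_x$ Strichartz norm (costing $T^{\alpha}\mu^{-1/4+\alpha/2}$ via Lemma \ref{lm-LocStr} with $q=2+$) and the other factor in plain energy $L^\infty_TL^2_x$. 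As written, your scheme proves a shorter existence time than the theorem asserts.

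A secondary point: the ``bilinear Strichartz estimates via transversality of the characteristic surfaces $\tau=\pm m_\mu(\xi)$'' that your plan relies on are neither established in your sketch nor needed. The paper's bilinear bounds (Lemma \ref{lm-biest}) are pure H\"older--Bernstein products of a linear Strichartz estimate with the energy estimate; the threshold $s>1/4$ and the uniform-in-$\mu$ constants already come out of the resulting dyadic summation, with the regime transition you worry about handled simply by splitting the sum according to $\lambda_{\min},\lambda_{\mathrm{med}},\lambda_{\max}\lessgtr\mu^{-1/2}$. Insisting on genuine transversality estimates uniform across $|\xi|\sim\mu^{-1/2}$ takes on a substantially harder task than the theorem requires, and you give no argument for those estimates; if you drop them and instead fix the time/$\mu$ exponents as above, the rest of your outline goes through along the paper's lines.
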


\medskip    
The paper is organized as follows: In Section 2, we prove localized dispersive and Strichartz estimates for the linear propagators associated to \eqref{wt2d-transf}.
In Section 3 and 4 we prove Theorem \ref{theorem2d-reduc} and bilinear estimates that are crucial in the proof of Theorem \ref{theorem2d-reduc}.

\medskip
\noindent \textbf{Notation}. 
We fix an even smooth function $\chi \in C_0^{\infty}(\mathbb R)$ such that
\begin{equation*}
 0 \le \chi \le 1, \quad
\chi_{|_{[-1,1]}}=1 \quad \mbox{and} \quad  \mbox{supp}(\chi)
\subset [-2,2]
\end{equation*}
 and set
$$
\beta(s)
=\chi\left(s\right)-\chi \left(2s\right).
 $$
 For a dyadic number
  $\lambda \in  2^\Z$,  we set $\beta_{\lambda}(s):=\beta\left(s/\lambda\right)$, and thus $\supp \beta_\lambda= 
\{ s\in \R: \lambda/ 2 \le |s| \le 2\lambda \}$. 
Now define the frequency projection $P_\lambda$ via
\begin{align*}
\widehat{P_{\lambda} f}(\xi)  = \beta_\lambda(|\xi|)\widehat { f}(\xi) .
 \end{align*}
We sometimes write $f_\lambda:=P_\lambda f $, so that
\[ f=\sum_{\lambda  } f_\lambda ,\]
where summations throughout the paper are done over dyadic numbers in $ 2^\Z$.

The Sobolev space $H^s(\R^2)$ is defined via the norm 
$$
\norm{f}_{H^s} \sim \left[ \sum_{\lambda }  \angles{\lambda}^{2s}\norm{ f_\lambda}^2_{L^2_x} \right]^\frac 12, 
$$
 where $\angles{\xi}:= \left(1 +|\xi|^2\right)^{1/2}.$ If $B$ is a space of functions on $\R^2$, $T>0$ and $1\le p\le\infty$, we define the spaces $L^p\big((0,T) : B \big)$ and $L^p\big( \mathbb R : B\big)$ respectively via the norms
$$
\|f\|_{L^p_TB_x} = \left( \int_0^T \|f(\cdot,t)\|_{B}^p dt \right)^{\frac1p} \quad  \textrm{and} \quad \|f\|_{L^p_tB_x} = \left( \int_{\mathbb R} \|f(\cdot,t)\|_{B}^p dt \right)^{\frac1p} \, ,
$$
when $1 \le p < \infty$, with the usual modifications when $p=+\infty$.
\section{localised dispersive and Strichartz estimates}

First we derive lower bound estimates for the phase function
$$
m_\mu(r)=r \sqrt{K_\mu(r)}
$$
and its first two derivatives. Clearly,
$K_\mu(r) \sim \angles{ \sqrt \mu r}^{-1}$, 
and hence
\begin{equation}
\label{m-est}
|m_\mu(r)| \sim |r| \angles{ \sqrt \mu r}^{-1/2}.
\end{equation}

\begin{lemma}\label{lm-mest}
 For all $r>0$, we have
\begin{align}
\label{m'-est} 
0<m_\mu'(r)& \sim \angles{\sqrt \mu r}^{-1/2},
\\
\label{m''-est} 
0<-m_\mu ''(r)&\sim   \mu r \angles{\sqrt \mu r}^{-5/2}.
\end{align}
\end{lemma}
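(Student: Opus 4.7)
The plan is to exploit the scaling invariance to reduce to a dimensionless statement, and then to verify the dimensionless estimates by direct computation combined with one algebraic identity that gives concavity on the nose.

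First I would set $M(s) := \sqrt{s\tanh s}$ and observe that $m_\mu(r) = \mu^{-1/2} M(\sqrt{\mu}\,r)$, so by the chain rule
\begin{equation*}
m_\mu'(r) = M'(\sqrt{\mu}\,r), \qquad m_\mu''(r) = \sqrt{\mu}\, M''(\sqrt{\mu}\,r).
\end{equation*}
Hence, setting $s := \sqrt{\mu}\,r$, the two claims \eqref{m'-est}--\eqref{m''-est} become the scale-free statements
\begin{equation*}
M'(s) \sim \angles{s}^{-1/2}, \qquad -M''(s) \sim s\angles{s}^{-5/2} \qquad (s > 0),
\end{equation*}
together with $M' > 0$ and $M'' < 0$.

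For the first derivative I would write $f(s) := s\tanh s = M(s)^2$, so that
\begin{equation*}
M'(s) = \frac{f'(s)}{2\sqrt{f(s)}} = \frac{\tanh s + s\,\mathrm{sech}^2 s}{2\sqrt{s\tanh s}}.
\end{equation*}
Positivity is immediate; the quantitative bound then follows by considering $s \lesssim 1$ (where a Taylor expansion gives $M'(s) = 1 + O(s^2)$) and $s \gtrsim 1$ (where $1-\tanh s$ and $\mathrm{sech}^2 s$ are both $O(e^{-2s})$, yielding $M'(s) \sim s^{-1/2}$). This matches $\angles{s}^{-1/2}$ on both regimes.

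For the second derivative, the clean identity I would rely on is the sum-of-squares expression
\begin{equation*}
(f'(s))^2 - 2f(s)f''(s) = \bigl(\tanh s - s\,\mathrm{sech}^2 s\bigr)^2 + 4s^2 \tanh^2 s\,\mathrm{sech}^2 s,
\end{equation*}
which one obtains after substituting $f''(s) = 2\,\mathrm{sech}^2 s\,(1 - s\tanh s)$ and canceling. Since $M'' = -\bigl[(f')^2 - 2ff''\bigr]/(4f^{3/2})$, this simultaneously gives $M''(s) < 0$ for all $s>0$ (no case analysis needed) and the explicit expression
\begin{equation*}
-M''(s) = \frac{(\tanh s - s\,\mathrm{sech}^2 s)^2 + 4s^2\tanh^2 s\,\mathrm{sech}^2 s}{4(s\tanh s)^{3/2}}.
\end{equation*}
Estimating this in the two regimes is then routine: for $s \lesssim 1$, Taylor gives $\tanh s - s\,\mathrm{sech}^2 s = \tfrac{2}{3}s^3 + O(s^5)$, the second square dominates at $\sim 4s^4$, and the denominator is $\sim 4s^3$, so $-M''(s) \sim s$; for $s \gtrsim 1$, the first square is $\sim 1$ and the second exponentially small, while the denominator is $\sim 4s^{3/2}$, giving $-M''(s) \sim s^{-3/2}$. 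Both match $s\angles{s}^{-5/2}$ on the appropriate regime.

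The only non-routine step is the sum-of-squares identity, which is the whole point of the proof: it turns the concavity of $M$ into a one-line algebraic check rather than a delicate estimate, and it is what I expect to be the main (and really only) obstacle.
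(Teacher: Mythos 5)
Your proposal is correct, and its first step --- the scaling identity $m_\mu(r)=\mu^{-1/2}M(\sqrt{\mu}\,r)$ reducing everything to the dimensionless function $M=m_1$ --- is exactly the paper's argument. Where you diverge is that the paper stops there and cites \cite[Lemma 8]{DST20} for the two estimates $M'(s)\sim\angles{s}^{-1/2}$ and $-M''(s)\sim s\angles{s}^{-5/2}$, whereas you prove them from scratch. Your computations check out: with $f(s)=s\tanh s$ one indeed has $f''(s)=2\,\mathrm{sech}^2 s\,(1-s\tanh s)$, and expanding $(f')^2-2ff''$ does produce the sum-of-squares identity
\begin{equation*}
(f')^2-2ff''=\bigl(\tanh s-s\,\mathrm{sech}^2 s\bigr)^2+4s^2\tanh^2 s\,\mathrm{sech}^2 s,
\end{equation*}
which gives strict concavity of $M$ for free and reduces both two-sided bounds to reading off the leading behaviour at $s\to 0$ and $s\to\infty$ (the Taylor coefficient $\tanh s-s\,\mathrm{sech}^2 s=\tfrac23 s^3+O(s^5)$ and the limits you state are all correct, and the uniform bounds on the intermediate compact region follow from continuity and the strict positivity you have already established). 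What your route buys is a self-contained, checkable proof with an algebraic identity replacing the delicate sign analysis one might otherwise fear; what the paper's route buys is brevity, at the cost of deferring the real content to an external reference.
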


\begin{proof}
Since $$
 m_{\mu}(r)= \frac{1}{\sqrt{\mu}} m_{1}(\sqrt{\mu} r)$$
 \eqref{m'-est} and \eqref{m''-est} reduce to proving
\begin{align*}
0<m_{1}'(r)& \sim  \angles{r}^{-1/2},
\\
0< - m_{1}''(r)|&\sim    |r|  \angles{ r}^{-5/2}
\end{align*}
both of which are proved in \cite[Lemma 8]{DST20}.

\end{proof}

Van der Corput's Lemma will be useful in the derivation of the dispersive estimate in Lemma \ref{lm-dispest} below.
\begin{lemma}[Van der Corput's Lemma, \cite{S93} ]\label{lm-corput}
 Assume 
$g \in C^1(a, b)$, $\psi\in  C^2(a, b)$ and $|\psi''(r)|  \gtrsim A$ for all $r\in (a, b)$. Then 
\begin{align}
\label{corput} 
\Bigabs{\int_a^b e^{i t  \psi(r)}   g(r) \, dr}& \le C  (At)^{-1/2}  \left[ |g(b)| + \int_a^b |g'(r)| \, dr \right] ,
\end{align}
for some constant $C>0$ that is independent of $a$, $b$ and $t$.
\end{lemma}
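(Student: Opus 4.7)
The plan is to first establish the estimate in the special case $g \equiv 1$, obtaining the bare oscillatory bound $\left|\int_a^b e^{it\psi(r)}\,dr\right| \le C(At)^{-1/2}$, and then to recover the general statement by integrating by parts in $g$.

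For the case $g \equiv 1$, the hypothesis $|\psi''| \gtrsim A$ forces $\psi'$ to be strictly monotone on $(a,b)$, so for any $\delta > 0$ the sub-level set $E_\delta := \{r \in (a,b) : |\psi'(r)| \le \delta\}$ is an interval of length at most $2\delta/A$ (apply the mean value theorem to $\psi'$). The contribution to the integral from $E_\delta$ is then trivially bounded by $2\delta/A$. On each of the (at most two) complementary intervals where $|\psi'| > \delta$, I would integrate by parts using $e^{it\psi} = (it\psi')^{-1}\tfrac{d}{dr} e^{it\psi}$; this produces boundary terms of size $O((t\delta)^{-1})$ together with an interior integral $\int \psi''\,(t(\psi')^2)^{-1} e^{it\psi}\,dr$, which the substitution $u = \psi'(r)$ (legitimate by monotonicity of $\psi'$) converts into $\int du/(tu^2) = O((t\delta)^{-1})$. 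Summing these contributions yields
\[
\left|\int_a^b e^{it\psi(r)}\,dr\right| \le C\left(\frac{\delta}{A} + \frac{1}{t\delta}\right),
\]
and the choice $\delta = \sqrt{A/t}$ optimizes the right-hand side to $C(At)^{-1/2}$.

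To pass from $g \equiv 1$ to arbitrary $g \in C^1$, define $F(r) := \int_a^r e^{it\psi(s)}\,ds$. Applying the previous step to each subinterval $[a,r]$ yields the uniform bound $|F(r)| \le C(At)^{-1/2}$ for $r \in [a,b]$. Integration by parts,
\[
\int_a^b e^{it\psi(r)} g(r)\,dr = F(b)\, g(b) - \int_a^b F(r)\, g'(r)\,dr,
\]
combined with the bound on $F$, immediately yields the claimed inequality. The main obstacle is the treatment of the stationary region $E_\delta$ in the special case; however, the lower bound on $|\psi''|$ precisely controls the size of $E_\delta$ in a way that balances against the $1/(t\delta)$ contribution from the non-stationary tails, making the optimization routine.
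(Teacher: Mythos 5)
Your proof is correct: it is the standard two-step argument (the $g\equiv 1$ case via splitting off the stationary set $\{|\psi'|\le\delta\}$, whose length is controlled by the lower bound on $|\psi''|$, integrating by parts on the complement, and optimizing $\delta\sim\sqrt{A/t}$; then the general amplitude via $F(r)=\int_a^r e^{it\psi}$ and one more integration by parts). The paper itself gives no proof of this lemma --- it is quoted from Stein \cite{S93} --- and your argument is essentially the one found there, so there is nothing to reconcile; the only points worth making explicit are that continuity of $\psi''$ together with $|\psi''|\gtrsim A>0$ forces $\psi''$ to have constant sign, which is what makes $\psi'$ monotone, makes $E_\delta$ a single interval, and lets you evaluate $\int\bigl|\tfrac{d}{dr}(1/\psi')\bigr|\,dr$ as a telescoping boundary difference bounded by $2/\delta$.
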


Now we follow a similar argument\footnote{Inequality \eqref{dispest} is derived in \cite{DST20} in the case $\mu=1$ and $\lambda >1$.} as in \cite[Lemma 9]{DST20}
to derive 
a  frequency localized  $L^1-L^{\infty}$ decay estimate for the linear propagator associated to \eqref{wt2d-transf}:
$$
\mathcal S_{m_{\mu}}( \pm  t) := e^{\mp it m_{\mu}(D)}.
$$

\begin{lemma}[Localised dispersive estimate] \label{lm-dispest}
Let $0 < \mu \le 1$ and $\lambda\in 2^\Z$.
Then
\begin{align}
\label{dispest}
\| \mathcal S_{m_{\mu}}(\pm  t)P_{\lambda} f  \|_{L^\infty_x(\R^2)} &\lesssim  \min\left( \lambda^2, \ \mu^{-\frac12}  \angles{\sqrt \mu \lambda}^{\frac 32} t^{-1} \right)  \| f\|_{L_x^1(\R^2)} \end{align}
for all $f \in \mathcal{S}(\R^2)$.
 \end{lemma}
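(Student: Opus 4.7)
The plan is to reduce, via Young's convolution inequality, to an $L^\infty_x$ bound for the convolution kernel of $\mathcal S_{m_\mu}(\pm t) P_\lambda$, namely
\[
K_\lambda^\pm(t, x) = \int_{\R^2} e^{i(x\cdot\xi \mp t m_\mu(|\xi|))} \beta_\lambda(|\xi|) \, d\xi.
\]
The trivial bound $\|K_\lambda^\pm(t,\cdot)\|_{L^\infty_x} \lesssim \lambda^2$ follows immediately from the area $\sim \lambda^2$ of the annular support of $\beta_\lambda(|\cdot|)$ and the triangle inequality, yielding the first alternative in the claimed minimum.

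For the dispersive bound I would exploit the radial symmetry of the phase by passing to polar coordinates, which gives
\[
K_\lambda^\pm(t, x) = 2\pi \int_0^\infty e^{\mp i t m_\mu(r)} J_0(r|x|) \beta_\lambda(r) r \, dr,
\]
and split according to whether $|x|\lambda \gtrsim 1$ or $|x|\lambda \lesssim 1$ via a smooth cutoff in $r$. In the large-$|x|\lambda$ range I would insert the classical asymptotic $J_0(y) = c\, y^{-1/2}\re\bigl(e^{i(y-\pi/4)}\bigr) + O(y^{-3/2})$, thereby reducing to two 1D oscillatory integrals with phases $\psi_\pm(r) = -t m_\mu(r) \pm r|x|$ and amplitudes of size $r^{1/2}|x|^{-1/2}\beta_\lambda(r)$. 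Van der Corput (Lemma \ref{lm-corput}) applied with the lower bound $|m_\mu''(r)| \gtrsim \mu \lambda \angles{\sqrt\mu \lambda}^{-5/2}$ from Lemma \ref{lm-mest} then produces the factor $(t\mu\lambda\angles{\sqrt\mu\lambda}^{-5/2})^{-1/2}$; multiplying by the amplitude mass $\sim \lambda^{1/2}|x|^{-1/2}$ gives the intermediate bound $\mu^{-1/2}\angles{\sqrt\mu\lambda}^{5/4} t^{-1/2} |x|^{-1/2}$.

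The key final step is to take the supremum over $x$ so that the two oscillatory mechanisms conspire to yield $t^{-1}$ rather than merely $t^{-1/2}$. Away from the critical radius $|x| \sim t|m_\mu'(\lambda)| \sim t\angles{\sqrt\mu\lambda}^{-1/2}$, non-stationarity of $\psi_\pm'$ (using $|m_\mu'(r)| \sim \angles{\sqrt\mu r}^{-1/2}$ from Lemma \ref{lm-mest}) allows integration by parts and handles both the small-$|x|\lambda$ range and $|x|$ much larger than critical, in each case delivering decay stronger than required. At the critical value, substitution $|x|^{-1/2} \sim t^{-1/2}\angles{\sqrt\mu\lambda}^{1/4}$ combines with the intermediate bound to collapse exactly to $\mu^{-1/2}\angles{\sqrt\mu\lambda}^{3/2}t^{-1}$, as claimed. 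The main obstacle is precisely this uniform-in-$x$ bookkeeping: one must track the amplitude, the Bessel prefactor, and the integration-by-parts gains simultaneously so that the worst $x$ sits at the stationary-phase radius, where the two independent oscillations stack multiplicatively to produce the full $t^{-1}$ dispersion with the sharp constant $\mu^{-1/2}\angles{\sqrt\mu\lambda}^{3/2}$.
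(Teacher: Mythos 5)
Your proposal is correct and follows essentially the same route as the paper: reduction to the kernel via Young's inequality, polar coordinates and the Bessel function $J_0$, a direct integration by parts when $\lambda|x|\lesssim 1$, the oscillatory decomposition of $J_0$ when $\lambda|x|\gg 1$ with integration by parts in the non-stationary regimes, and Van der Corput at the stationary radius $|x|\sim \angles{\sqrt\mu\lambda}^{-1/2}t$, where substituting $|x|^{-1/2}\sim t^{-1/2}\angles{\sqrt\mu\lambda}^{1/4}$ upgrades $t^{-1/2}$ to $t^{-1}$ exactly as in the paper. The only cosmetic difference is that you quote the Bessel asymptotic with an $O(y^{-3/2})$ remainder, whereas the paper uses the exact form $J_0(s)=e^{is}h(s)+e^{-is}\bar h(s)$ with symbol-type bounds on $h$, which is what one actually needs to integrate by parts; this is a presentational point, not a gap.
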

 \begin{proof}
 Without loss of generality we may assume $t>0$ and $\pm=-$. Now
we can write
\[
\left[ \mathcal S_{m_\mu}(- t) f_\lambda \right](x) 
= (I_{\lambda, \mu} (\cdot, t)\ast f)(x),
\]
where
\begin{equation}\label{Idef}
 I_{\lambda, \mu} (x, t)
=\lambda^2 \int_{\R^2} e^{i \lambda x \cdot \xi+ it {m_\mu}(\lambda \xi)}  \beta(\xi) \, d\xi .
\end{equation}
By Young's inequality
\begin{equation}\label{younginq}
\| S_{m_\mu}(-t) f_\lambda  \|_{L^\infty_x(\R^2)} \le \| I_{\lambda, \mu (\cdot, t)} \|_{L^\infty_x(\R^2)}  \|f\|_{L_x^1(\R^2)},
\end{equation}
and therefore, \eqref{dispest} reduces to proving
\begin{equation*}
\|I_{\lambda, \mu} (\cdot, t) \|_{L^\infty_x(\R^2)}  \lesssim  \min\left( \lambda^2, \ \mu^{-\frac12}  \angles{\sqrt \mu \lambda}^{\frac 32} t^{-1} \right) .
\end{equation*}
But clearly,
\begin{equation*}
\|I_{\lambda, \mu} (\cdot, t) \|_{L^\infty_x(\R^2)} 
\lesssim  \lambda^2,
\end{equation*}
and so we reduce further to proving 
\begin{equation}
\label{Iest-1}
\|I_{\lambda, \mu} (\cdot, t) \|_{L^\infty_x(\R^2)}  \lesssim   \mu^{-\frac12}  \angles{\sqrt \mu \lambda}^{\frac 32} t^{-1} .
\end{equation}

To prove \eqref{Iest-1} first observe that $I_{\lambda, \mu} (x, t)$ is radially symmetric w.r.t $x$, as it is 
 the inverse Fourier transform of the radial function $e^{it {m_\mu}(\lambda \xi)}  \beta(\xi) $, and so we may 
 set $x =(|x|, 0)$.
Therefore, using polar coordinates we can write
\begin{equation}\label{I-eq}
I_{\lambda, \mu} (x, t) =2\pi \lambda^2 \int_{1/2}^2  e^{it m_\mu(\lambda r) }J_0( \lambda r |x|) r\beta(r) \, dr,
\end{equation}
where $J_k(r)$ is the Bessel function:
$$
J_k(r)=\frac{ (r/2)^k}{(k+1/2) \sqrt{\pi}} \int_{-1}^1  e^{ir s} \left(1-s^2\right)^{k-1/2} \, ds \quad \text{for} \ k>-1/2.
$$
  The Bessel function $J_k(r)$ satisfies the following properties for $k>-1/2$ and $r>0$ (See   \cite[Appendix B]{G08} and \cite{S93}):
\begin{align}
\label{Jm1}
J_k (r) &\le Cr^{k} ,
\\
\label{Jm2}
J_k(r)& \le C r^{-1/2} ,
\\
\label{Jm3}
\partial_r \left[ r^{-k} J_k(r)\right] &= -r^{-k} J_{k+1}(r)
\end{align}
Moreover, we can write
\begin{equation}
\label{J0est}
J_0(s)= e^{is} h(s)  +e^{-is}\bar h(s)
\end{equation}
for some function $h$ satisfying the estimate 
\begin{equation}
\label{h-est}
| \partial_r ^j h(r)|\le C_j \angles{r}^{-1/2-j}  \quad \text{for all} \ j\ge 0.
\end{equation}

We prove \eqref{Iest-1} by treating the cases $  |x|\lesssim \lambda^{-1}$ and $  |x|\gg  \lambda^{-1}$ separately. 
\subsubsection{Case 1: $  |x|\lesssim  \lambda^{-1}$}
By \eqref{Jm1} and \eqref{Jm3} we have for all $ r\in (1/2, 2)$ the estimate
\begin{equation}
\label{J0derv-est}
\left| \partial_r^j J_0( \lambda r |x|) \right|\lesssim 1  \quad \text{for $j=0,1$}.
\end{equation}
We integrate by parts \eqref{I-eq} to obtain
\begin{align*}
 I_{\lambda, \mu} (x, t) 
&=-2\pi i\lambda t^{-1}  \int_{1/2}^2  \frac{d}{dr}\left\{ e^{it m_\mu (\lambda r) }\right\}  [ m_\mu'(\lambda r) ]^{-1} J_0( \lambda r |x|) \tilde\beta(r) \, dr
\\
&=2\pi i\lambda t^{-1}  \int_{1/2}^2   e^{it m_\mu (\lambda r) }[m_\mu'(\lambda r) ]^{-1} \partial_r\left[ J_0( \lambda r |x|) \tilde\beta(r) \right] \, dr
\\
 & \qquad -2\pi i\lambda t^{-1}  \int_{1/2}^2   e^{it m_\mu (\lambda r) } [m_\mu'(\lambda r) ]^{-2} \lambda m_\mu''(\lambda r) J_0( \lambda r |x|) \tilde\beta(r)  \, dr.
\end{align*}
Then applying Lemma \ref{lm-mest} 
and \eqref{J0derv-est} 
we obtain 
\begin{equation}
\label{I-est1-2d}
\begin{split}
|  I_{\lambda, \mu} (x, t) | &\lesssim \lambda  t^{-1} \left( \angles{ \sqrt \mu \lambda }^{\frac 12}+ \mu\lambda^2 \angles{ \sqrt \mu \lambda }^{-\frac32}  \right)
\\
&
\lesssim \lambda \angles{ \sqrt \mu \lambda }^{\frac12} t^{-1}  \lesssim \mu^{-\frac12}  \angles{\sqrt \mu \lambda}^{\frac 32} t^{-1} .
\end{split}
\end{equation}

\subsubsection{Case 2: $  |x|\gg  \lambda^{-1}$ }
Using \eqref{J0est} in \eqref{I-eq} we write
\begin{align*}
 I_{\lambda, \mu} (x, t) 
 &=2\pi \lambda^2 \left\{\int_{1/2}^2  e^{it \phi^+_{\lambda,\mu} (r)  }  h(\lambda r |x|)  \tilde \beta(r) \, dr +  \int_{1/2}^2  e^{-it \phi^-_{\lambda, \mu} (r)  } \bar  h(\lambda r |x|)  \tilde\beta(r) \, dr \right\},
\end{align*}
where 
$$
\phi^\pm_{\lambda, \mu} (r)=    \lambda r|x|/t  \pm  m_\mu (\lambda r) .
$$
Set $H_{\lambda}( |x|, r) =h(\lambda r |x|)  \tilde \beta(r)$. In view of \eqref{h-est} we have 
\begin{equation}
\label{Hest}
| H_{\lambda}( |x|, r)  | +  |\partial_r  H_{\lambda}( |x|, r)  | \lesssim   (\lambda |x|)^{-1/2}
\end{equation}
for all $ r\in (1/2, 2)$, 
where we also used the fact $\lambda |x|\gg 1$.

Now 
we write 
$$
 I_{\lambda, \mu} (x, t) 
= I^+_{\lambda, \mu} (x, t) 
+  I^-_{\lambda, \mu} (x, t) ,
$$
where
\begin{align*}
 I^+_{\lambda, \mu} (x, t) 
 &=2\pi \lambda^2 \int_{1/2}^2  e^{it \phi^+_{\lambda, \mu} (r)  } H_{\lambda}( |x|, r)  \, dr ,
 \\
I^-_{\lambda, \mu} (x, t) &= 2\pi \lambda^2
   \int_{1/2}^2  e^{-it \phi^-_{\lambda, \mu} (r)  }\bar H_{\lambda}( |x|, r)  \, dr .
\end{align*}

Observe that
$$
\partial_r \phi^\pm_{\lambda, \mu} (r)=  \lambda  \left[ |x|/t \pm  m_\mu'(\lambda r) \right],\qquad \partial_r^2\phi^\pm_{\lambda, \mu} (r)=     \pm  \lambda^2 m_\mu''(\lambda r),
$$
and hence by Lemma \ref{lm-mest}, 
\begin{equation}
\label{phi'+:est}
|\partial_r \phi^+_{\lambda, \mu} (r)|\gtrsim  \lambda\angles{ \sqrt \mu \lambda }^{-1/2}
\qquad
|\partial^2_r \phi^\pm_{\lambda, \mu} (r)| \sim  \mu  \lambda^3\angles{  \sqrt\mu \lambda }^{-5/2}
\end{equation}
for all $ r\in (1/2, 2)$, where we also used the fact that $m'$ is positive.

\subsubsection*{\underline{Estimate for  $I^+_{\lambda, \mu} (x, t)$ } }
By integration by parts we have
\begin{align*}
 I^+_{\lambda, \mu} (x, t)
&=2\pi i t^{-1} \lambda^2  \int_{1/2}^2  e^{it \phi^+_{\lambda, \mu} (r)  }  \left\{
\frac{\partial_r H_{\lambda}( |x|, r)  }{\partial_r \phi^+_{\lambda, \mu} (r) } -   \frac{\partial^2_r \phi^+_{\lambda, \mu} (r)   H_{\lambda}( |x|, r)  }{\left[\partial_r \phi^+_{\lambda, \mu} (r) \right]^{2}}\right\}\, dr.
\end{align*}
Then using \eqref{Hest} and \eqref{phi'+:est} we have
\begin{equation}
\label{I-est2-2d}
\begin{split}
| I^+_{\lambda, \mu} (x, t)|
&\lesssim  t^{-1} \lambda^2 \left( \lambda^{-1}\angles{ \sqrt \mu \lambda }^{\frac12}  + \mu \lambda \angles{ \sqrt \mu \lambda }^{-\frac 32}  \right)  (\lambda|x|)^{-\frac 12} 
\\
&\lesssim   \lambda \angles{ \sqrt \mu \lambda }^{\frac12}  t^{-1} \lesssim  \mu^{-\frac12}  \angles{\sqrt \mu \lambda}^{\frac 32} t^{-1},
\end{split}
\end{equation}
where we also used the assumption $  \lambda |x|\gg  1$.

\subsubsection*{\underline{Estimate for $I^-_{\lambda, \mu} (x, t)$}}
We treat the the non-stationary and stationary cases separately. In the non-stationary
case, where 
$$ |x | \ll   \angles{ \sqrt \mu \lambda }^{-1/2} t \quad \text{or} \quad |x| \gg   \angles{ \sqrt \mu \lambda } ^{-1/2} t, $$ we have 
$$
|\partial_r \phi^-_{\lambda, \mu} (r)|\gtrsim  \lambda\angles{ \sqrt \mu \lambda }^{-1/2},
$$
and hence $I^-_{\lambda, \mu} (x, t)$ can be estimated in exactly the same way as $I^+_{\lambda, \mu} (x, t)$ above, and satisfies the same bound.

So it remains to treat the stationary case: 
$$
|x | \sim  \angles{ \sqrt \mu \lambda }^{-1/2} t.
$$
 In this case,  
we use Lemma \ref{lm-corput}, \eqref{phi'+:est} and\eqref{Hest} to obtain 
\begin{equation}\label{Iest-station}
\begin{split}
| I^-_{\lambda, \mu} (x, t) 
&\lesssim \lambda^2 \left( \mu  \lambda^3\angles{  \sqrt\mu \lambda }^{-5/2}  t \right)^{-\frac12}\left[ |H_\lambda^- (x, 2) |+ \int_{1/2}^2 | \partial_r  H_\lambda^- (x, r)| \, dr\right]
\\
&\lesssim  \mu^{-\frac12}  \lambda^{\frac12 }  \angles{\sqrt \mu \lambda}^{\frac54}   t^{-\frac12} \cdot (\lambda |x|)^{-\frac 12}
\\
& \lesssim      \mu^{-\frac12}  \angles{\sqrt \mu \lambda}^{\frac 32} t^{-1}  ,
\end{split}
\end{equation}
where we also used the fact that $H_\lambda^- (x, 2) =0$.

 \end{proof}

\begin{lemma}[Localised Strichartz estimates]\label{lm-LocStr}
 Let $0 < \mu \le 1$ and $\lambda\in 2^\Z$.
 Assume that the pair $(q, r)$ is \emph{Strichartz admissible} in the sense that
  \begin{equation} \label{admissible}
 q> 2, \ r\ge 2 \quad \text{and} \quad \frac1r+\frac1q=\frac12 \, .
 \end{equation} 
 Then
 \begin{align}
\label{Strest1d}
\norm{ \mathcal S_{m_\mu}(\pm t) f_{\lambda}}_{ L^{q}_{t} L^{r}_{ x} (\R^{2+1}) } \lesssim \mu ^{-\frac1{2q}} \angles{\sqrt\mu \lambda }^{\frac3{2q}}  
\norm{  f_{\lambda}}_{ L^2_{ x}(\R^2 )} ,
\\
\label{Strest1d-inh}
\norm{ \int_0^t \mathcal S_{m_\mu}(\pm(t-s)) F_\lambda (s) \, ds}_{ L^{q}_{t} L^{r}_{ x} (\R^{2+1}) } \lesssim \mu ^{-\frac1{2q}} \angles{\sqrt\mu \lambda }^{\frac3{2q}}  
\norm{ F_{\lambda}}_{ L_t^1L^2_{ x}(\R^{2+1} )} 
\end{align}
for all  $f \in \mathcal{S}(\R^2)$ and  $F \in \mathcal{S}(\R^{2+1})$.
\end{lemma}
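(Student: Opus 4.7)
The plan is to carry out the classical $TT^*$ argument of Keel--Tao, with the dispersive estimate \eqref{dispest} of Lemma \ref{lm-dispest} as the essential input.

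First I would observe that, since the symbol $m_\mu(\xi)$ is real-valued, the propagator $\mathcal S_{m_\mu}(\pm t)$ is unitary on $L^2(\R^2)$, so that $\bignorm{\mathcal S_{m_\mu}(\pm t)P_\lambda f}_{L^2_x} = \bignorm{P_\lambda f}_{L^2_x}$. Combining this $L^2$ isometry with the $L^1\to L^\infty$ decay bound
\[
\bignorm{\mathcal S_{m_\mu}(\pm(t-s))P_\lambda f}_{L^\infty_x} \lesssim \mu^{-1/2}\angles{\sqrt\mu\lambda}^{3/2} |t-s|^{-1}\bignorm{P_\lambda f}_{L^1_x}
\]
from Lemma \ref{lm-dispest} (via the semigroup identity $\mathcal S_{m_\mu}(\pm t)\mathcal S_{m_\mu}(\pm s)^* = \mathcal S_{m_\mu}(\pm(t-s))$), Riesz--Thorin interpolation yields, for any admissible pair $(q,r)$ satisfying $1-2/r = 2/q$,
\[
\bignorm{\mathcal S_{m_\mu}(\pm(t-s))P_\lambda f}_{L^r_x} \lesssim \bigl(\mu^{-1/2}\angles{\sqrt\mu\lambda}^{3/2}\bigr)^{2/q} |t-s|^{-2/q}\bignorm{P_\lambda f}_{L^{r'}_x}.
\]

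Denoting $T f(t) := \mathcal S_{m_\mu}(\pm t)P_\lambda f$, so that $T T^* F(t) = \int_\R \mathcal S_{m_\mu}(\pm(t-s))P_\lambda F(s)\,ds$, Minkowski's inequality and the Hardy--Littlewood--Sobolev convolution inequality on $\R$ applied to the scalar kernel $|t|^{-2/q} \in L^{q/2,\infty}(\R)$---which requires precisely $q > 2$, the hypothesis that excludes the forbidden endpoint $(q,r) = (2,\infty)$---give
\[
\bignorm{T T^* F}_{L^q_t L^r_x} \lesssim \bigl(\mu^{-1/2}\angles{\sqrt\mu\lambda}^{3/2}\bigr)^{2/q}\bignorm{F}_{L^{q'}_t L^{r'}_x}.
\]
The standard identity $\bignorm{T^* F}_{L^2_x}^2 = \innerprod{TT^*F}{F} \lesssim \bignorm{T T^*F}_{L^q_t L^r_x}\bignorm{F}_{L^{q'}_t L^{r'}_x}$ together with $TT^*$ duality then yields \eqref{Strest1d} with Strichartz constant $(\mu^{-1/2}\angles{\sqrt\mu\lambda}^{3/2})^{1/q} = \mu^{-1/(2q)}\angles{\sqrt\mu\lambda}^{3/(2q)}$, exactly as claimed.

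Finally, for the inhomogeneous estimate \eqref{Strest1d-inh}, the version with $\int_\R$ in place of $\int_0^t$ already follows from the bound on $TT^*$ above combined with \eqref{Strest1d} and its dual; the retarded integral $\int_0^t$ is then recovered via the Christ--Kiselev lemma, whose hypothesis is satisfied precisely because $q > 2 > q'$. The main technical work of the lemma has already been carried out in Lemma \ref{lm-dispest}; what remains here is essentially book-keeping of the $\mu$- and $\lambda$-dependent factor through a standard $TT^*$ chain, and no new analytic obstacle arises beyond verifying that this factor is raised to exactly the power $1/q$ at the end.
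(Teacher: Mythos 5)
Your proposal is correct and follows essentially the same route as the paper: the non-endpoint $TT^*$ argument, interpolating the localised dispersive estimate of Lemma \ref{lm-dispest} against $L^2$ unitarity and closing with Minkowski plus Hardy--Littlewood--Sobolev, with the constant $\bigl(\mu^{-1/2}\angles{\sqrt\mu \lambda}^{3/2}\bigr)^{1/q}=\mu^{-1/(2q)}\angles{\sqrt\mu\lambda}^{3/(2q)}$ emerging exactly as you track it. The only cosmetic difference is at the very end: since the right-hand side of \eqref{Strest1d-inh} is $L^1_tL^2_x$ rather than the dual norm $L^{q'}_tL^{r'}_x$, the retarded estimate already follows from the homogeneous one by Minkowski's integral inequality, so your appeal to Christ--Kiselev, while valid, is heavier machinery than required.
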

 \begin{proof}
  We shall use the Hardy-Littlewood-Sobolev inequality which
asserts that
\begin{equation}
\label{HLSineq}
\norm{|\cdot |^{-\gamma}\ast f}_{L^a(\R)} \lesssim \ \norm{ f}_{L^b(\R)} 
\end{equation}
whenever $1 < b < a < \infty$ and $0 < \gamma< 1$ obey the scaling condition
$$
\frac1b=\frac1a +1-\gamma.
$$

We prove only \eqref{Strest1d} since \eqref{Strest1d-inh}  follows from \eqref{Strest1d}  by the standard $TT^*$--argument.
First note that \eqref{Strest1d} holds true for the pair $(q, r)=(\infty, 2)$ 
as this is just the energy inequality.  So we may assume $q\in(2, \infty)$.

Let $q'$ and $r'$ be the conjugates of $q$ and $r$, respectively, i.e., $q'=\frac q{q-1}$ and  $r'=\frac r{r-1}$.
 By the standard $TT^*$--argument, \eqref{Strest1d} is equivalent to the estimate 
\begin{equation}
\label{TTstar}
\norm{ TT^\ast F }_{L^{q}_{t} L^{r}_{ x} (\R^{2+1}) } \lesssim \mu ^{-\frac1{q}} \angles{\sqrt\mu \lambda }^{\frac3{q}}  
\norm{ F  }_{ L^{q'}_{ t} L_x^{r'}(\R^{2+1} )},
\end{equation}
where 
\begin{equation}\label{TTastF}
\begin{split}
 TT^\ast F (x, t)&= \int_{\R}  \int_\R e^{i  x  \xi\mp i(t-s) {m_\mu}(  \xi)}  \beta^2_\lambda (\xi)   \widehat{F}( \xi, s)\, ds  d\xi
 \\
 &= \int_\R  K_{\lambda,  t-s} \ast F( \cdot,  s) \, ds,
 \end{split}
\end{equation}
with
\begin{align*}
 K_{\lambda ,t}(x)&= \int_{\R}  e^{i  x  \xi\mp it  {m_\mu}(  \xi)}  \beta^2_\lambda (\xi)   \, d\xi.
\end{align*}
Since $$K_{\lambda, t} \ast g (x)= \mathcal S_{m_\mu}(t)  P_\lambda g_\lambda (x)$$  
 it follows from \eqref{dispest} that
\begin{equation}\label{kest1}
\|K_{\lambda, t} \ast g \|_{L_x^{\infty}(\R)} \lesssim  \mu ^{-\frac12} \angles{\sqrt\mu \lambda }^{\frac32}  |t|^{-1}  \|g\|_{L_x^{1}(\R)}.
\end{equation}
On the other hand, we have by Plancherel 
\begin{equation}\label{kest2}
\|K_{\lambda, t} \ast g \|_{L_x^{2}(\R)} \lesssim    \|g\|_{L_x^{2}(\R)}.
\end{equation}
So interpolation between \eqref{kest1} and \eqref{kest2} yields
\begin{equation}\label{kest3}
\|K_{\lambda, t} \ast g \|_{L_x^{r}(\R)} \lesssim  \left[  \mu ^{-\frac12} \angles{\sqrt\mu \lambda }^{\frac32}\right]^{1- \frac2r }  |t|^{-\left(1-\frac 2r\right)} \|g\|_{L_x^{r'}(\R^2)}
\end{equation}
 for all $  r \in[2, \infty].$

Applying Minkowski's inequality to \eqref{TTastF}, and then  \eqref{kest3} and  \eqref{HLSineq}
with $(a, b)=(q , q' )$ and $\gamma= 1-2/r=2/q$,
 we obtain
 \begin{align*}
\norm{TT^\ast F }_{L^{q}_{t} L^{r}_{ x} (\R^{2+1})}
&\le \norm{   \int_\R \norm{ K_{\lambda, t-s,} \ast
   F(s, \cdot) }_{L_x^r (\R^2)}  \, ds}_{L^{q}_t(\R)}
  \\
 &\lesssim \mu ^{-\frac1{q}} \angles{\sqrt\mu \lambda }^{\frac3{q}}   \norm{  \int_\R  |t-s|^{-\frac2q }
  \norm{ F(s, \cdot) }_{ L_x^{r'}(\R^2)}  \, ds }_{L_t^{q}(\R)}
   \\
 &\lesssim  \mu ^{-\frac1{q}} \angles{\sqrt\mu \lambda }^{\frac3{q}}    \norm{  
  \norm{ F }_{L_x^{r'}(\R^2) }  }_{L^{q'}_{ t} (\R)}
    \\
 &=  \mu ^{-\frac1{q}} \angles{\sqrt\mu \lambda }^{\frac3{q}}  
  \norm{ F  }_{ L^{q'}_{ t} L_x^{r'}(\R^{2+1})} \, ,
\end{align*}
which is the desired estimate \eqref{TTstar}.
 \end{proof}

 \section{Proof of Theorem \ref{theorem2d-reduc}}
The bilinear terms in \eqref{wtnonlin} can be written as 
 \begin{equation*}
\begin{split}
\mathcal N^\pm_\mu(u_+, u_-)  &=   \frac12 \sum_{\pm_1, \pm_2} \pm_2 D K_\mu \mathbf R \cdot  \left( u_{\pm_1}  \mathbf R   \sqrt{K_\mu}u_{\pm_2} \right) 
\\
& \qquad \qquad \pm  \frac14 \sum_{\pm_1, \pm_2}  (\pm_1) (\pm_2) D \sqrt{K_\mu }  \left( \mathbf R   \sqrt{K_\mu }   u_{\pm_1}   \cdot \mathbf R  \sqrt{K_\mu} u_{\pm_2} \right) ,
\end{split}
\end{equation*}
where $\pm_1$ and $\pm_2$ are independent signs.
Then the Duhamel's representation of \eqref{wt2d-transf} is given by
\begin{equation}
\label{uinteq}
\begin{split}
  u_\pm(t) &= \mathcal S_{m_\mu}(\pm t) f_\pm \mp \frac{ i  \epsilon}2 \sum_{\pm_1, \pm_2}  (\pm_2)  \mathcal A^\pm_{\mu} (u_{\pm_1},  u_{\pm_2})(t) 
  \\
  & \qquad \qquad \qquad \mp \frac{ i  \epsilon}4   \sum_{\pm_1, \pm_2}   (\pm_1) (\pm_2) \mathcal B^\pm_{\mu} (u_{\pm_1},  u_{\pm_2})(t),
  \end{split}
   \end{equation}
 where 
 \begin{equation}
\label{AB} 
 \begin{split}
\mathcal A^\pm_{\mu} (u,v)(t)&:=\int_{0}^{t}    \mathcal S_{m_\mu}( \pm(t-t') D K_\mu  \mathbf R \cdot \left( u  \mathbf R \sqrt{K_\mu} v \right)(t') \,dt' ,
\\
\mathcal  B^\pm_{\mu}  (u,v)(t)&:= \int_{0}^{t} \mathcal S_{m_\mu}( \pm(t-t') D \sqrt{K_\mu} \left(   \mathbf R  \sqrt{K_\mu} u  \cdot   \mathbf R  \sqrt{K_\mu} v \right)(t') \,dt'  .
\end{split}
\end{equation}

Now let $(q, r)$ with $q, \, r >2$ be a Strichartz admissible pair.
We define the contraction space, $X^s_T$, via the norm
$$
\norm{u}_{X^s_T}= \left[ \sum_{\lambda}  \angles{\lambda}^{2s}\norm{ u}^2_{X_\lambda} \right]^\frac 12,
$$
where
$$
\norm{u}_{X_\lambda} =
  \left[ \norm{  P_\lambda u}^2_{ L_T^{\infty}L_x^{2} } +\mu ^{\frac1{q}} \angles{\sqrt\mu \lambda }^{-\frac3{q}}   \norm{ P_\lambda u}^2_{ L_T^{q} L_x^{r}  }  \right]^\frac12.
$$
Observe that
\begin{equation}
\label{Xlam-est}
\begin{split}
\norm{ P_\lambda  u}_{ L_T^{\infty}L_x^{2} }  &\le \norm{  u}_{X_\lambda} ,
\\
\norm{  P_\lambda u}_{L_T^{q} L_x^{r }  }  &\le  \mu ^{-\frac1{2q}} \angles{\sqrt\mu \lambda }^{\frac3{2q}}\norm{  u}_{X_\lambda} .
\end{split}
\end{equation}
Moreover,
$$
  X^s_T \subset L_T^\infty H^s.
$$

 We estimate the linear part of \eqref{uinteq} using 
 \eqref{Strest1d} as follows:
 \begin{equation}
\label{homest} 
\begin{split}
 \norm{\mathcal S_{m_\mu}(\pm t)  f_\pm}_{X^s_T} &= \left[ \sum_{\lambda }  \angles{ \lambda}^{2s} \norm{  \mathcal S_{m_\mu}(\pm t) f_\pm }^2_{ X_\lambda}  \right]^\frac 12
 \\
 &\lesssim  \left[ \sum_{\lambda} \angles{ \lambda}^{2s} \norm{  P_\lambda  f_\pm}^2_{ L_x^{2} }  \right]^\frac 12
 \\
 & \sim \norm{   f_\pm}_{ H^{s} } .
\end{split}
\end{equation} 
So Theorem \ref{theorem2d-reduc} reduces to proving the following bilinear estimates whose proof will be given in the next section.
\begin{lemma}\label{lm-keybiest}
Let $0<\alpha\ll 1$, $0< \mu\le 1$, $s\ge 1/4+ \alpha$ and $T>0$. Then for
all $u, \ v \in X^s_T$, we have
\begin{align}
\label{Keybiest1}
 \norm{ \mathcal A^\pm_{\mu} (u,v) }_{X^s_T} &\lesssim  T^{\frac12+\alpha}  \mu^{-\frac 34+ \frac\alpha 2} \norm{u}_{X^s_T}  \norm{v}_{X^s_T}  ,
 \\
 \label{Keybiest2}
 \norm{ \mathcal B^\pm_{\mu} (u,v)   }_{X^s_T} &\lesssim  T^{\frac12+\alpha}  \mu^{-\frac 34+ \frac\alpha 2} \norm{u}_{X^s_T}  \norm{v}_{X^s_T}  ,
\end{align}
where $\mathcal A^\pm_{\mu}$ and $\mathcal B^\pm_{\mu}$ are as in \eqref{AB}.

\end{lemma}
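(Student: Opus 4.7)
The plan is to apply the inhomogeneous Strichartz estimate of Lemma \ref{lm-LocStr} to reduce each bilinear estimate to an $L^1_T L^2_x$ bound on the nonlinear symbol, then to control that $L^1_T L^2_x$ norm via a paraproduct decomposition combined with Hölder's inequality at a Strichartz-admissible pair chosen slightly more dispersive than $(4,4)$.

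The first step is the Strichartz reduction: by Lemma \ref{lm-LocStr} and the definition of $X_\lambda$,
$$\|P_\lambda \mathcal A^\pm_\mu(u,v)\|_{X_\lambda} \lesssim \|P_\lambda [DK_\mu \mathbf R\cdot (u\,\mathbf R \sqrt{K_\mu}\, v)]\|_{L^1_T L^2_x},$$
and similarly for $\mathcal B^\pm_\mu$. The Fourier multiplier $DK_\mu$ acting on frequency $\lambda$ has symbol of size $\lambda\langle\sqrt\mu\lambda\rangle^{-1}$, and the elementary inequality $\lambda\langle\sqrt\mu\lambda\rangle^{-1} \leq \mu^{-1/2}$, uniform in $\lambda$, is precisely the source of the $\mu^{-1/2}$ in the target $\mu^{-3/4+\alpha/2}$ loss. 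The operator $\sqrt{K_\mu}$ acting on the dyadic piece $v_{\lambda_2}$ contributes a harmless factor $\langle\sqrt\mu\lambda_2\rangle^{-1/2} \leq 1$.

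Next I fix a Strichartz-admissible pair $(q,r) = (4/(1-2\alpha),\,4/(1+2\alpha))$, for which $1-2/q = 1/2+\alpha$ and $1/q = 1/4 - \alpha/2$. Decomposing $u = \sum u_{\lambda_1}$ and $v = \sum v_{\lambda_2}$, applying Hölder's inequality in time and space, using Bernstein to convert $L^r_x$-control into $L^4_x$-control on a dyadic piece at the cost of a $\lambda_i^{\alpha}$ factor, and the Strichartz-type bound $\|P_\lambda u\|_{L^q_T L^r_x} \lesssim \mu^{-1/(2q)}\langle\sqrt\mu\lambda\rangle^{3/(2q)}\|u\|_{X_\lambda}$ built into the definition of $X_\lambda$, one obtains the dyadic bilinear bound
$$\|u_{\lambda_1}\,\mathbf R\sqrt{K_\mu}\,v_{\lambda_2}\|_{L^1_T L^2_x} \lesssim T^{1/2+\alpha}\,\mu^{-1/4+\alpha/2}\,(\lambda_1\lambda_2)^{\alpha}\,\langle\sqrt\mu\lambda_1\rangle^{3/(2q)}\langle\sqrt\mu\lambda_2\rangle^{3/(2q)-1/2}\,\|u\|_{X_{\lambda_1}}\|v\|_{X_{\lambda_2}}.$$
Combined with the $\mu^{-1/2}$ from the output-side prefactor, this produces the desired $T^{1/2+\alpha}\mu^{-3/4+\alpha/2}$ scaling per dyadic block.

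The main obstacle is the dyadic summation in the weighted $\ell^2$-norm defining $X^s_T$. After the standard paraproduct split, the high-low and low-high interactions are direct since one input frequency matches the output and the remaining sum in the low frequency closes via Cauchy-Schwarz for any $s>0$. The delicate case is high-high, $\lambda_1 \sim \lambda_2 = \Lambda \gtrsim \lambda$: after summing the output weights $\langle\lambda\rangle^{2s}$ against the $\lambda$-prefactor (which yields a factor $\mu^{-1/2}\Lambda^s$ uniformly in $\Lambda$) and distributing the input weights $\langle\Lambda\rangle^{-s}$ to pass from $\|u\|_{X_\Lambda}$ to $\|u\|_{X^s_T}$, the remaining series in $\Lambda$ takes the form $\sum_\Lambda \Lambda^{\gamma - s}\tilde a_\Lambda \tilde b_\Lambda$ with $\gamma = 1/4 + \alpha/2$ arising in the worst dispersion regime $\sqrt\mu\Lambda\geq 1$ from $\langle\sqrt\mu\Lambda\rangle^{3/q - 1/2} \lesssim (\sqrt\mu\Lambda)^{1/4-3\alpha/2}$ together with the Bernstein loss $\Lambda^{2\alpha}$. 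Cauchy-Schwarz closes this sum provided $s > \gamma = 1/4 + \alpha/2$, which is guaranteed by the hypothesis $s > 1/4 + \alpha$. The estimate for $\mathcal B^\pm_\mu$ is analogous: the operator $D\sqrt{K_\mu}$ on the output side has slightly weaker decay than $DK_\mu$, but this is compensated by the additional $\sqrt{K_\mu}$ acting on each input factor, so the same scheme applies verbatim.
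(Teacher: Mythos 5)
Your reduction to an $L^1_TL^2_x$ bound, the choice of an admissible pair just off $(4,4)$, and the bookkeeping of the $T^{1/2+\alpha}$ and $\mu^{-3/4+\alpha/2}$ factors are all consistent with the paper, and your treatment of the balanced (high--high) interaction reproduces the paper's threshold $s>1/4+\alpha/2$. The gap is in the unbalanced interactions, which you dismiss as ``direct'' and claim close ``for any $s>0$''. Your symmetric H\"older split $L^q_TL^4_x\times L^q_TL^4_x$ places the Strichartz weight $\angles{\sqrt\mu\lambda_i}^{3/(2q)}$ and the Bernstein loss $\lambda_i^\alpha$ on \emph{both} input frequencies, as your displayed dyadic bound shows. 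Now take the interaction $\lambda_2\ll\lambda_1\sim\lambda_0$ with $\sqrt\mu\lambda_1\gg 1$ (the paper's region $\Lambda_{11}\cup\Lambda_{12}$). The factor sitting on the high input frequency is
\begin{equation*}
\lambda_1^{\alpha}\,\angles{\sqrt\mu\lambda_1}^{3/(2q)}\sim \mu^{3/16-3\alpha/8}\,\lambda_1^{3/8+\alpha/4},
\end{equation*}
and since $\lambda_1\sim\lambda_0$ matches the output weight $\angles{\lambda_0}^{s}$ exactly, this growth cannot be absorbed: the resulting sum $\sum_{\lambda_0}\angles{\lambda_0}^{2s}\lambda_0^{3/4+\alpha/2}a_{\lambda_0}^2(\cdots)$ requires $u\in X^{s+3/8+\alpha/4}_T$, i.e.\ a loss of $3/8+$ derivatives relative to the claimed statement. (The output multiplier $\lambda_0\angles{\sqrt\mu\lambda_0}^{-1}\le\mu^{-1/2}$ gives no compensating negative power of $\lambda_0$, and the $\angles{\sqrt\mu\lambda_2}^{-1/2}$ smoothing sits on the wrong, low, frequency here.) A secondary inaccuracy: even with the correct dyadic bound, the low-frequency sum in the unbalanced cases does \emph{not} close for all $s>0$; the paper's cases $\Lambda_{12}$ and $\Lambda_{22}$ carry a loss $\lambda_{\min}^{1/4+\alpha/2}$ on the low frequency and also need $s>1/4+\alpha/2$.

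The fix, which is what the paper does (Lemma \ref{lm-biest}), is to use an asymmetric split: put the \emph{lower}-frequency factor in $L^q_TL^r_x$ with $1/q=1/2-\alpha$, $1/r=\alpha$, upgrade it to $L^q_TL^\infty_x$ by Bernstein at cost $(\lambda_1\wedge\lambda_2)^{2\alpha}$, and put the higher-frequency factor in $L^\infty_TL^2_x$. Then the entire Strichartz/Bernstein loss $(\lambda_1\wedge\lambda_2)^{2\alpha}\angles{\sqrt\mu(\lambda_1\wedge\lambda_2)}^{3/4-3\alpha/2}$ lands on $\lambda_{\min}$, and in the worst regime it combines with $\angles{\sqrt\mu\lambda_2}^{-1/2}$ and $\lambda_{\min}\ll\lambda_{\max}$ to give an effective loss $\lambda_{\min}^{1/4+\alpha/2}$, which Cauchy--Schwarz sums precisely for $s>1/4+\alpha/2$. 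You would need to rework your dyadic bilinear estimate along these lines (or at least switch to the asymmetric H\"older pairing in the unbalanced cases) before the summation step can be carried out at the stated regularity.
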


Indeed, given that Lemma \ref{lm-keybiest} holds,
we solve the integral equations \eqref{uinteq} by contraction mapping techniques
as follows. Define the mapping
\[
  (u_+, u_-) \mapsto \left (\Phi^+_\mu (u_+, u_-), \Phi_\mu^-(u_+, u_-)\right),
\]
where $\Phi^\pm_\mu(u_+, u_-)$ is given by the right hand side of \eqref{uinteq}. 

Now given initial data with norm 
$$\sum_\pm \| f_\pm\|_{ H^s}\le \mathcal D_0 , $$ 
we look for a solution in the set
\[
  E_T = \left\{  (u_\pm  \in X^s_T
  \colon  \sum_\pm \| u_\pm\|_{ X^s_T} \le 2 C \mathcal D_0 \right\}.
\]
Then by \eqref{homest} and Lemma \ref{lm-keybiest} we have
\begin{align*}
\sum_\pm  \norm{\Phi^\pm_\mu(u_+, u_-)}_{ X^s_T} &\le C\sum_\pm  \norm{   f_\pm}_{ H^{s} } + C T^{\frac12+\alpha}  \mu^{-\frac 34+ \frac\alpha 2} \epsilon \left(\sum_\pm \| u_\pm\|_{ X^s_T}  \right)^2
\\
&\le C \mathcal D_0 + C T^{\frac12+\alpha}  \mu^{-\frac 34 + \frac\alpha 2}  \epsilon ( 2C \mathcal D_0)^2
\\
&\le 2 C \mathcal D_0,
\end{align*}
provided that 
\begin{equation}\label{T}
T\le \left( 8 C^2 \mathcal D_0\right)^{-2+ \delta} \mu^{\frac32-\delta } \epsilon^{-2+\delta },
\end{equation}
where $\delta = \frac {4\alpha } {1+2\alpha} \ll 1$.

Similarly, for two pair of solutions $(u_+, u_-)$ and $(v_+, v_-)$ in $E_T$ with the same data, one can derive the difference estimate
\begin{align*}
 &\sum_\pm  \norm{ \Phi^\pm_\mu(u_+, u_-)- \Phi^\pm_\mu(v_+, v_-)}_{ X^s_T} 
 \\
 & \quad \le C T^{\frac12+\alpha}  \mu^{-\frac 34+ \frac\alpha 2}  \epsilon \left(\sum_\pm \| u_\pm\|_{ X^s_T}+ \| v_\pm\|_{ X^s_T}  \right) \left(\sum_\pm \| u_\pm- v_\pm\|_{ X^s_T}  \right)
  \\
 & \quad \le 4 C^2 T^{\frac12+\alpha}  \mu^{-\frac 34+ \frac\alpha 2}  \epsilon \mathcal D_0 \left(\sum_\pm \| u_\pm- v_\pm\|_{ X^s_T}  \right)
 \\
 & \quad \le \frac12   \left(\sum_\pm \| u_\pm- v_\pm\|_{ X^s_T}  \right),
\end{align*}
where in the last inequality we used \eqref{T}.

Therefore, $( \Phi^+_\mu, \Phi^-_\mu)$ is a contraction on $E_T$ and therefore it has a unique fixed point $(u_+, u_-) \in E_T$ solving the integral equation \eqref{uinteq}--\eqref{AB} on $\R^2\times [0, T]$, where $$T \sim  \mathcal D_0^{-2+}\mu^{\frac32-}  \epsilon^{-2+}.$$ Uniqueness in the space $X^s_T \times X^s_T$ and continuous dependence on the initial data can be shown in a similar way, by the difference estimates. This concludes the proof of Theorems \ref{theorem2d-reduc}.

\section{Proof of Lemma \ref{lm-keybiest}}

First we prove key bilinear estimates in Lemma \ref{lm-biest} below that will be crucial in the proof of Lemma  \ref{lm-keybiest}. 
By Bernstein inequality, we have for all $r_1, r_2 \in \R$ and $p\ge 1$,
\begin{equation}\label{Bern-est}
 \| |D|^{r_1} K^{r_2}_\mu \mathbf R f_{\lambda} \|_{L^p_x}   \lesssim  \lambda^{r_1} \angles{ \sqrt \mu \lambda}^{-r_2}  \|f_{\lambda}\|_{L^p_x},
\end{equation}
where $\mathbf R=|D|^{-1}\nabla $ is the Riesz transform.

For $\lambda_j \in2^\Z$ ($j=0,1, 2)$,
we denote
$$ \bm \lambda =(\lambda_0, \lambda_1, \lambda_2), \quad \lambda_1 \wedge \lambda_2=\min (\lambda_1, \lambda_2).
$$
\begin{lemma}
\label{lm-biest}
Let $0<\alpha\ll 1$, $0< \mu\le 1$ and $T>0$. Then for all $u\in X_{\lambda_1 }$ and $v \in X_{\lambda_2 }$ we have
\begin{align}
\label{keybiles11}
\norm{ |D| K_\mu   P_{\lambda_0} 
 \mathbf R \cdot \left(   u_{\lambda_1} \mathbf R \sqrt  K_\mu v_{\lambda_2} \right) }_{L_{T,x}^2}  
& \lesssim  C_{\mu, T}( \bm \lambda) 
\norm{u}_{  X_{\lambda_1 } }
\norm{v}_{  X_{\lambda_2 } },
\\
\label{keybiles12}
\norm{ |D| \sqrt {K_\mu   }
 P_{\lambda_0}\left(   \mathbf R  \sqrt  K_\mu  u_{\lambda_1}  \cdot \mathbf R \sqrt  K_\mu v_{\lambda_2} \right) }_{L_{T,x}^2}  
& \lesssim  \widetilde C_{\mu, T}(\bm \lambda) 
\norm{u}_{  X_{\lambda_1 } }
\norm{v}_{  X_{\lambda_2 } },
\end{align}
 where 
\begin{align}
\label{C}
C_{\mu, T}(\bm \lambda)  &=   T^{\alpha}  \mu^{-\frac14+\frac\alpha 2} \frac{\lambda_0 
(\lambda_1 \wedge \lambda_2)^{2\alpha} \angles{\sqrt\mu (\lambda_1 \wedge \lambda_2) }^{\frac34-\frac{3\alpha}2} }{ \angles{\sqrt{\mu} \lambda_0}  \angles{\sqrt{\mu} \lambda_2} ^{\frac12}}   ,
\\ 
\label{Ctilde}
\widetilde C_{\mu, T}(\bm \lambda)  &=  T^{\alpha}  \mu^{-\frac14+\frac\alpha 2} \frac{\lambda_0 (\lambda_1 \wedge \lambda_2)^{2\alpha}  \angles{\sqrt\mu (\lambda_1 \wedge \lambda_2) }^{\frac34-\frac{3\alpha}2}   }{\prod_{j=0}^2 \angles{\sqrt{\mu} \lambda_j} ^{\frac12}}. 
\end{align}
\end{lemma}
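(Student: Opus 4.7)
The approach is to combine the Bernstein-type bound \eqref{Bern-est} with H\"older's inequality in space-time and the Strichartz control built into the $X_\lambda$ norm. The idea is to push all Fourier multipliers out of the $L^2_{T,x}$ norm by \eqref{Bern-est}, split the bilinear product by H\"older with the higher-frequency factor placed in the energy piece $L^\infty_T L^2_x$ and the lower-frequency factor placed in $L^2_T L^\infty_x$, and then control the latter via Bernstein in space together with H\"older in time and the Strichartz piece $L^q_T L^r_x$ of the $X_\lambda$ norm. A carefully chosen Strichartz admissible pair $(q,r)$ will simultaneously produce all the distinctive powers of $T$, $\mu$, $\lambda_1\wedge\lambda_2$ and $\angles{\sqrt\mu(\lambda_1\wedge\lambda_2)}$ appearing in $C_{\mu,T}$ and $\widetilde C_{\mu,T}$.

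For \eqref{keybiles11}, assume WLOG $\lambda_1\le\lambda_2$. Applying \eqref{Bern-est} to the outer operator $|D|K_\mu P_{\lambda_0}\mathbf R\cdot$ pulls out a factor $\lambda_0/\angles{\sqrt\mu\lambda_0}$, and applying it to the inner $\mathbf R\sqrt{K_\mu}$ acting on $v_{\lambda_2}$ pulls out $\angles{\sqrt\mu\lambda_2}^{-1/2}$. After these reductions it suffices to estimate $\bignorm{u_{\lambda_1}\,v_{\lambda_2}}_{L^2_{T,x}}$ by the remaining target factor times $\norm{u}_{X_{\lambda_1}}\norm{v}_{X_{\lambda_2}}$. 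Splitting via H\"older,
\begin{equation*}
\bignorm{u_{\lambda_1}\,v_{\lambda_2}}_{L^2_{T,x}} \le \bignorm{u_{\lambda_1}}_{L^2_T L^\infty_x}\,\bignorm{v_{\lambda_2}}_{L^\infty_T L^2_x},
\end{equation*}
the second factor is controlled directly by $\norm{v}_{X_{\lambda_2}}$ via \eqref{Xlam-est}. For the first factor, Bernstein in space gives $\norm{u_{\lambda_1}}_{L^\infty_x}\lesssim \lambda_1^{2/r}\norm{u_{\lambda_1}}_{L^r_x}$, and H\"older in time together with the Strichartz piece of $\norm{u}_{X_{\lambda_1}}$ yields
\begin{equation*}
\bignorm{u_{\lambda_1}}_{L^2_T L^\infty_x}\lesssim T^{\frac12-\frac1q}\,\lambda_1^{\frac2r}\,\mu^{-\frac1{2q}}\,\angles{\sqrt\mu\lambda_1}^{\frac3{2q}}\,\norm{u}_{X_{\lambda_1}}.
\end{equation*}

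Taking the Strichartz admissible pair $(q,r)=\bigl(\tfrac{2}{1-2\alpha},\tfrac1\alpha\bigr)$, valid for small $\alpha>0$, converts these exponents into $T^\alpha$, $\lambda_1^{2\alpha}$, $\mu^{-1/4+\alpha/2}$ and $\angles{\sqrt\mu\lambda_1}^{3/4-3\alpha/2}$, matching exactly the remaining factors of $C_{\mu,T}(\bm\lambda)$. The case $\lambda_2<\lambda_1$ is handled symmetrically by placing $u_{\lambda_1}$ in $L^\infty_T L^2_x$ and $\mathbf R\sqrt{K_\mu}v_{\lambda_2}$ in $L^2_T L^\infty_x$; the inner operator still pulls out $\angles{\sqrt\mu\lambda_2}^{-1/2}$ via \eqref{Bern-est} at $p=\infty$, so the same $C_{\mu,T}$ appears. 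Estimate \eqref{keybiles12} follows by the same scheme: the outer $|D|\sqrt{K_\mu}$ now contributes only $\lambda_0/\angles{\sqrt\mu\lambda_0}^{1/2}$, while each inner $\mathbf R\sqrt{K_\mu}$ contributes $\angles{\sqrt\mu\lambda_j}^{-1/2}$ via \eqref{Bern-est}, producing the symmetric denominator $\prod_{j=0}^{2}\angles{\sqrt\mu\lambda_j}^{1/2}$ in $\widetilde C_{\mu,T}$.

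The main book-keeping challenge is to verify that a single choice of $(q,r)$ produces all the distinctive exponents in $C_{\mu,T}$ and $\widetilde C_{\mu,T}$ at once, and to track correctly the asymmetric weight $\angles{\sqrt\mu\lambda_2}^{-1/2}$ attached to the inner operator on $v$ regardless of whether $\lambda_2$ is the maximal or minimal frequency; no deeper analytic obstacle is expected, since the dispersive input Lemma \ref{lm-LocStr} and the Bernstein bound \eqref{Bern-est} have already been prepared.
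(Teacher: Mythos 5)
Your proposal is correct and follows essentially the same route as the paper: the same admissible pair $(q,r)=(\tfrac{2}{1-2\alpha},\tfrac1\alpha)$, the same H\"older-in-time/Bernstein-in-space splitting placing the low-frequency factor in the Strichartz component of $X_\lambda$ and the high-frequency factor in $L^\infty_T L^2_x$, and the same use of \eqref{Bern-est} to extract the multiplier weights. Your explicit treatment of the case $\lambda_2<\lambda_1$ (noting that the weight $\angles{\sqrt\mu\lambda_2}^{-1/2}$ stays attached to $v$ via \eqref{Bern-est} in $L^p$ for any $p$) is if anything slightly more careful than the paper's bare appeal to symmetry.
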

\begin{proof} 

We only prove \eqref{keybiles11} since the proof for \eqref{keybiles12} is similar.
By symmetry we may assume $\lambda_1 \le \lambda_2$. Let
$$
\frac1q=\frac12-\alpha, \quad \frac1r = \alpha
$$ 
so that $(q,r)$ is Strichartz admissible.
Then by H\"{o}lder, Bernstein inequality, \eqref{Bern-est} and \eqref{Xlam-est} we obtain
\begin{align*}
\text{LHS} \ \eqref{keybiles11} & \lesssim \lambda_0\angles{\sqrt{\mu} \lambda_0}^{-1} \norm{   \mathbf R \cdot( u_{\lambda_1} \mathbf R \sqrt  K_\mu v_{\lambda_2} ) }_{L_{T,x}^2}  
\\
&\lesssim   \lambda_0\angles{\sqrt{\mu} \lambda_0}^{-1}
 \angles{\sqrt{\mu} \lambda_2}^{-\frac12}  \cdot  T^{\frac12-\frac1q} \lambda_{1}^{\frac 2r
}  \norm{u_{\lambda_1}}_{ L_T^q L_x^r  } \norm{  v_{\lambda_2}}_{ L_T^\infty L_x^2 }  
\\
&\lesssim T^\alpha\mu^{-\frac14 + \frac\alpha 2}  \frac{\lambda_0\lambda_1^{2\alpha} \angles{\sqrt\mu \lambda_1}^{\frac34 -\frac{3\alpha} 2} }{ \angles{\sqrt{\mu} \lambda_0}  \angles{\sqrt{\mu} \lambda_2}^{\frac12} }   
   \norm{u}_{X_{\lambda_1} }
  \norm{ v }_{X_{\lambda_2}}
\end{align*}
which proves \eqref{keybiles11}.
\end{proof}

Now we are ready to prove Lemma  \ref{lm-keybiest}:  \eqref{Keybiest1} \&  \eqref{Keybiest2}. To this end, we decompose 
 $ u=\sum_{\lambda} u_{\lambda}$ and $v=\sum_{\lambda} v_{\lambda}.$
Note that by denoting
$$
a_{\lambda}:=  \norm{u }_{  X_{\lambda} }, \quad  b_{\lambda}:=  \norm{v}_{  X_{\lambda }} $$
we can write
\begin{equation}\label{Xs-redf}
\norm{u}_{  X^s_T }= \norm{ \left(  \angles{\lambda}^{s}  a_{\lambda } \right)}_{l^2_{\lambda}}, \quad
\norm{v}_{  X^s_T }= \norm{ \left(  \angles{\lambda}^{s}  b_{\lambda } \right)}_{l^2_{\lambda}}.
\end{equation}

We shall make a frequent use of of the following dyadic summation estimate, for $\mu, \lambda \in 2^\Z$ and $ c_1, c_2,  p>0$:
\begin{equation*}
\sum_{\mu \sim \lambda } a_\mu \sim a_\lambda, \qquad 
\sum_{c_1 \lesssim \lambda \lesssim c_2} \lambda^p \lesssim 
\begin{cases}
& c_2^p  \quad \text{if} \ p>0,
\\
& c_1^p  \quad \text{if} \ p<0.
\end{cases}
\end{equation*} 

\subsection{Proof of \eqref{Keybiest1} }
Using \eqref{Strest1d-inh} and H\"{o}lder, we have
  \begin{align*}
\norm{   \mathcal A^\pm_{\mu} (u,v)  }_{X^s_T} ^2 
& \lesssim \sum_{\lambda_0}  \angles{\lambda_0}^{2s} \norm{   |D| K_\mu    \mathbf R \cdot P_{\lambda_0}  \left( u  \mathbf R \sqrt{K_\mu} v \right) }^2_{ L_T^1L_x^{2} } 
\\
  & \lesssim T \sum_{\lambda_0}  \angles{\lambda_0}^{2s} \norm{   |D| K_\mu    \mathbf R \cdot P_{\lambda_0}  \left( u  \mathbf R \sqrt{K_\mu} v \right) }^2_{ L_{T, x}^{2} } .
 \end{align*}
 But the dyadic decomposition 
 \begin{equation}\label{Iloc}
 \norm{   |D| K_\mu    \mathbf R \cdot P_{\lambda_0}  \left( u  \mathbf R \sqrt{K_\mu} v \right) }_{ L_{T, x}^{2} } 
 \lesssim \sum_{\lambda_1, \lambda_2}   \norm{   |D| K_\mu    \mathbf R \cdot P_{\lambda_0}  \left( u_{\lambda_1}  \mathbf R \sqrt{K_\mu} v_{\lambda_2}  \right) }_{ L_{T, x}^{2} } .
 \end{equation}
Now let $\lambda_\text{min}  $, $\lambda_\text{med}$  and $ \lambda_{\max}$ denote the minimum, median and the maximum of $\{\lambda_0, \lambda_1, \lambda_2\}$, respectively.
 By checking the support properties in Fourier space of the bilinear term on the right hand side of \eqref{Iloc} one can see that this term vanishes unless $\bm \lambda=(\lambda_0, \lambda_1, \lambda_2) \in   \Lambda,$
where
\begin{equation*}
  \Lambda= \{\bm \lambda : \  \lambda_\text{med} \sim \lambda_{\max}  \}.
  \end{equation*} 
Thus, we have a non-trivial contribution in \eqref{Iloc} if $\bm \lambda \in \cup_{j=0}^2  \Lambda_j,$ where
\begin{align*}
\Lambda_0 &= \{\bm \lambda : \ \lambda_0 \lesssim \lambda_1\sim  \lambda_2 \}, 
\\ 
\Lambda_1&= \{\bm \lambda : \ \lambda_2 \ll  \lambda_1\sim  \lambda_0 \}, 
\\ 
\Lambda_2 &= \{\bm \lambda : \ \lambda_1 \ll \lambda_2 \sim  \lambda_0 \}.
\end{align*}
 By using these facts, and applying \eqref{keybiles11} to the right hand side of \eqref{Iloc}, we get 
 \begin{align*}
\norm{   \mathcal A^\pm_{\mu} (u,v)  }_{X^s_T} ^2 
& \lesssim T  \sum_{j=0}^2   \mathcal I^{(j)}_{\mu, T},
 \end{align*}
 where 
\begin{equation}
\label{Gammadefj}
  \mathcal I^{(j)}_{\mu, T}= \sum_{\lambda_0}  \angles{\lambda_0}^{2s} \left[  \sum_{\lambda_1, \lambda_2: \ \bm \lambda \in   \Lambda_j }    C_{\mu, T}( \bm \lambda)   a_{\lambda_1 } b_{\lambda_2 }    \right]^2
\end{equation} 
with $C_{\mu, T}(\bm \lambda)$ as in \eqref{C}.

 So \eqref{Keybiest1} reduces to proving 
\begin{equation}
\label{GammaEstj}
 \mathcal I^{(j)}_{\mu, T} \lesssim  T^{2\alpha}  \mu^{-\frac 32+ \alpha} \norm{u}^2_{  X^s_T }\norm{v}^2_{ X^s_T } \quad (j=0,1,2).
\end{equation} 
Further, observe that $\Lambda_j\subset \cup_{k=0}^2  \Lambda_{jk},$ where
\begin{align*}
\Lambda_{j0} &= \{\bm \lambda \in \Lambda_j : \  \lambda_{\max}  \lesssim \mu^{-1/2} \}, 
\\
\Lambda_{j1} &= \{\bm \lambda \in \Lambda_j : \  \lambda_{\min} \lesssim \mu^{-1/2} \ \& \
 \lambda_\text{med}  \gg \mu^{-1/2}\}, 
 \\ 
 \Lambda_{j2}& = \{\bm \lambda \in \Lambda_j : \  \lambda_{\min}  \gg \mu^{-1/2}\}.
\end{align*}
So 
$$ \mathcal I^{(j)}_{\mu, T} \lesssim \sum_{k=0}^2 \mathcal I^{(jk)}_{\mu, T}, $$
 where
\begin{equation}
\label{Gammadefjk}
  \mathcal I^{(jk)}_{\mu, T}= \sum_{\lambda_0}  \angles{\lambda_0}^{2s} \left[  \sum_{\lambda_1, \lambda_2: \ \bm \lambda \in   \Lambda_{jk} }    C_{\mu, T}( \bm \lambda)   a_{\lambda_1 } b_{\lambda_2 }    \right]^2.
\end{equation} 
Thus, \eqref{GammaEstj} reduces further to proving 
\begin{equation}
\label{GammaEst-jk}
\mathcal I^{(jk)}_{\mu, T}  \lesssim  T^{2\alpha}  \mu^{-\frac 32+ \alpha} \norm{u}^2_{  X^s_T }\norm{v}^2_{ X^s_T } \quad (j,k=0, 1,2).
\end{equation}

We prove \eqref{GammaEst-jk} as follows.
\subsubsection{Estimates for $\mathcal I^{(0k)}_{\mu, T} $ ($k=0,1,2$)  } 
In the case $\bm \lambda \in \Lambda_{00} $, we have $$C_{\mu, T}(\bm \lambda)  \lesssim T^{\alpha}  \mu^{-\frac1{4} + \frac \alpha 2} \lambda_0 \lambda_2^{2\alpha} , $$ and hence
\begin{align*}
 \mathcal I^{(00)}_{\mu, T} 
&
\lesssim  T^{2\alpha} 
\mu^{-\frac12+ \alpha}  \sum_{ \lambda_0 \lesssim \mu^{-1/2}}  \lambda_0^2 \left( \sum_{ \lambda_1\sim \lambda_2\lesssim  \mu^{-1/2}}  
\angles{\lambda_1}^{s}  a_{\lambda_1 }  \cdot \angles{\lambda_2}^{2\alpha} b_{\lambda_2 } \right)^2
\\
&
\lesssim   T^{2\alpha}  \mu^{-\frac 32+ \alpha} \norm{u}^2_{  X^s_T }\norm{v}^2_{ X^s_T }
\end{align*}
for all $s\ge 2\alpha$, 
where to obtain the second line we used Cauchy Schwarz inequality in $\lambda_1\sim \lambda_2$ and \eqref{Xs-redf}.

If $\bm \lambda \in \Lambda_{01} $, we have $$C_{\mu, T}(\bm \lambda)  \lesssim T^{\alpha}   \mu^{-\frac{1}{8}-\frac\alpha 4}  \lambda_0 \lambda_2^{\frac14+\frac\alpha 2}, $$ and hence
\begin{align*}
\mathcal I^{(01)}_{\mu, T} 
&
\lesssim
T^{2\alpha} 
\mu^{-\frac14- \frac \alpha 2}    \sum_{ \lambda_0 \lesssim \mu^{-1/2}}  \lambda_0^2\left( \sum_{ \lambda_1\sim \lambda_2\gg \mu^{-1/2}}    \angles{\lambda_1}^{s} 
a_{\lambda_1 }  \cdot \ \lambda_2^{\frac14+\frac\alpha 2} b_{\lambda_2 } \right)^2
\\
&
\lesssim  
T^{2\alpha} 
\mu^{-\frac54- \frac \alpha 2}   \norm{u}^2_{  X^s_T }\norm{v}^2_{ X^s_T }
\end{align*}
for all $s\ge 1/4+ \alpha /4$.

Finally, if $\bm \lambda \in \Lambda_{02} $, we have $$C_{\mu, T}(\bm \lambda)  \lesssim T^{\alpha}  \mu^{-\frac{5}{8}-\frac\alpha 4}   \lambda_2^{\frac14+\frac\alpha 2} ,$$ and hence
\begin{align*}
\mathcal I^{(02)}_{\mu, T} 
&
\lesssim 
T^{2\alpha} 
\mu^{-\frac54- \frac \alpha 2}  
  \sum_{ \lambda_0 \gg \mu^{-1/2}}  \angles{\lambda_0}^{\frac12+ \alpha -2s} \left( \sum_{ \lambda_1\sim \lambda_2\gg \mu^{-1/2}}    \angles{\lambda_1}^{s} 
a_{\lambda_1 }  \cdot \angles{\lambda_2}^{s}  b_{\lambda_2 } \right)^2
\\
&
\lesssim  T^{2\alpha} 
\mu^{-\frac54- \frac \alpha 2}   \norm{u}^2_{  X^s_T }\norm{v}^2_{ X^s_T }
\end{align*}
for all $s> 1/4+\alpha/ 2$.

\subsubsection{Estimates for $\mathcal I^{(1k)}_{\mu, T} $ ($k=0,1,2$) } \label{secGamma1k}
If $\bm \lambda \in \Lambda_{10} $, we have $$C_{\mu, T}(\bm \lambda)  \lesssim T^{\alpha}  \mu^{-\frac1{4} + \frac \alpha 2} \lambda_0 \lambda_2^{2\alpha} .$$
Hence
\begin{align*}
\mathcal I^{(10)}_{\mu, T} 
&
\lesssim T^{2\alpha} 
\mu^{-\frac12+ \alpha} \sum_{ \lambda_0 \lesssim \mu^{-1/2}}   \angles{\lambda_0}^{2s}\lambda_0^2 \left( \sum_{ \lambda_2 \ll \lambda_1 \sim \lambda_0\lesssim \mu^{-1/2} }  
a_{\lambda_1 } \cdot  \lambda_2^{2\alpha}  b_{\lambda_2 } \right)^2
\\
& \lesssim T^{2\alpha} 
\mu^{-\frac 32+ \alpha }  \sum_{ \lambda_0 \lesssim \mu^{-1/2}}   \angles{\lambda_0}^{2s} a_{\lambda_0}^2 \left( \sum_{ \lambda_2  \lesssim \mu^{-1/2} }  
  \lambda_2^{2\alpha}  b_{\lambda_2 } \right)^2
\\
&
\lesssim T^{2\alpha} 
\mu^{-\frac 32+ \alpha } \norm{u}^2_{  X^s_T }\norm{v}^2_{ X^s_T },
\end{align*}
where to get the last two inequalities we used 
$\sum_{\lambda_1 \sim \lambda_0} a_{\lambda_1} \sim a_{\lambda_0} $ and by Cauchy Schwarz
\begin{align*}
 \sum_{ \lambda_2  \lesssim \mu^{-1/2} }  
  \lambda_2^{2\alpha}  b_{\lambda_2 } & \le \left( \sum_{ \lambda_2  \le 1}  
  +  \sum_{ \lambda_2  > 1}  \right)
  \lambda_2^{2\alpha}  b_{\lambda_2 } \\
  &\lesssim  \norm{ \left(  b_{\lambda_2 } \right)}_{l^2_{\lambda_2}}+ \norm{ \left(  \angles{\lambda_2}^{s}  b_{\lambda_2 } \right)}_{l^2_{\lambda_2}} \lesssim \norm{v}_{ X^s_T }
\end{align*}
for all $s>2\alpha $.

If $\bm \lambda \in \Lambda_{11} $, we have $$C_{\mu, T}(\bm \lambda)  \lesssim T^{\alpha}  \mu^{-\frac3{4} + \frac \alpha 2}  \lambda_2^{2\alpha} ,  $$
and hence using the previous argument
\begin{align*}
\mathcal I^{(11)}_{\mu, T} 
&
\lesssim
T^{2\alpha} 
\mu^{-\frac 32+ \alpha }  \sum_{ \lambda_0 \gg \mu^{-1/2}}   \angles{\lambda_0}^{2s}  \left( \sum_{  \lambda_1 \sim \lambda_0\gg \lambda_2, \ \lambda_2 \lesssim \mu^{-1/2}  }  
a_{\lambda_1 }  \lambda_2^{2\alpha}    b_{\lambda_2 } \right)^2
\\
&
\lesssim  T^{2\alpha} 
\mu^{-\frac 32+ \alpha }  \norm{u}^2_{  X^s_T }\norm{v}^2_{ X^s_T }
\end{align*}
for all $s>2\alpha $.

Finally, if $\bm \lambda \in \Lambda_{12} $, we have $$C_{\mu, T}(\bm \lambda)  \lesssim T^{\alpha}  \mu^{-\frac 58 -  \frac {\alpha} 4} \lambda_2^{\frac14+ \frac\alpha2}  ,  $$
and hence using the previous argument
\begin{align*}
\mathcal I^{(12)}_{\mu, T} 
&
\lesssim
T^{2\alpha} 
\mu^{-\frac 54-\frac \alpha 2 }  \sum_{ \lambda_0 \gg \mu^{-1/2}}   \angles{\lambda_0}^{2s}  \left( \sum_{  \lambda_1 \sim \lambda_0\gg \lambda_2\gg  \mu^{-1/2}  }  
a_{\lambda_1 }  \lambda_2^{ \frac14 + \frac\alpha 2}    b_{\lambda_2 } \right)^2
\\
&
\lesssim  T^{2\alpha} 
\mu^{-\frac 32+ \alpha }\norm{u}^2_{  X^s_T }\norm{v}^2_{ X^s_T }
\end{align*}
for all $s> 1/4 + \alpha /2$.

\subsubsection{Estimates for $\mathcal I^{(2k)}_{\mu, T} $ ($k=0,1,2$)  } 
If $\bm \lambda \in \Lambda_{20} $, we have $$C_{\mu, T}(\bm \lambda)  \lesssim T^{\alpha}  \mu^{-\frac1{4} + \frac \alpha 2} \lambda_0 \lambda_1^{2\alpha}  $$
and hence
 arguing as in the preceding subsection we obtain
\begin{align*}
 \mathcal I^{(20)}_{\mu, T}
&
\lesssim T^{2\alpha} 
\mu^{-\frac12+ \alpha} \sum_{ \lambda_0 \lesssim \mu^{-1/2}}   \angles{\lambda_0}^{2s}\lambda_0^2 \left( \sum_{ \lambda_1 \ll \lambda_2 \sim \lambda_0\lesssim \mu^{-1/2} }  
 \lambda_1^{2\alpha}  a_{\lambda_1 }   b_{\lambda_2 } \right)^2
 \\
&
\lesssim T^{2\alpha} 
\mu^{-\frac 32+ \alpha }\norm{u}^2_{  X^s_T }\norm{v}^2_{ X^s_T }
\end{align*}
for all $s>2\alpha $.

Next if $\bm \lambda \in \Lambda_{21} $, then $$C_{\mu, T}(\bm \lambda)  \lesssim T^{\alpha}  \mu^{-1 + \frac \alpha 2} \lambda_0^{-\frac12}  \lambda_1^{2\alpha}  , $$
and hence
\begin{align*}
 \mathcal I^{(21)}_{\mu, T}
&
\lesssim T^{2\alpha} 
\mu^{-2+ \alpha }  \sum_{ \lambda_0 \gg \mu^{-1/2}}   \angles{\lambda_0}^{2s}    \lambda_0^{-1} \left( \sum_{ \lambda_2 \sim \lambda_0\gg \lambda_1, \ \lambda_1 \lesssim \mu^{-1/2}  }  
 \lambda_1^{2\alpha}  a_{\lambda_1 }    b_{\lambda_2 } \right)^2
\\
&
\lesssim  T^{2\alpha} 
\mu^{-\frac 32+ \alpha }  \norm{u}^2_{  X^s_T }\norm{v}^2_{ X^s_T }
\end{align*}
for all $s>2\alpha $.

Finally, if $\bm \lambda \in \Lambda_{21} $, then $$C_{\mu, T}(\bm \lambda)  \lesssim T^{\alpha}  \mu^{-\frac 58 -  \frac {\alpha} 4}  \lambda_1^{\frac14+ \frac\alpha2}  , $$
and hence
\begin{align*}
  \mathcal I^{(22)}_{\mu, T}
&
\lesssim
T^{2\alpha} 
\mu^{-\frac 54-\frac \alpha 2 }  \sum_{ \lambda_0 \gg \mu^{-1/2}}   \angles{\lambda_0}^{2s}  \left( \sum_{ \lambda_2 \sim \lambda_0\gg \lambda_1\gg  \mu^{-1/2}  }   \lambda_1^{ \frac14 + \frac\alpha 2}   
a_{\lambda_1 }   b_{\lambda_2 } \right)^2
\\
&
\lesssim T^{2\alpha} 
\mu^{-\frac 54-\frac \alpha 2 } \norm{u}^2_{  X^s_T }\norm{v}^2_{ X^s_T }
\end{align*}
for all $s> 1/4 + \alpha / 2$.

\subsection{Proof of \eqref{Keybiest2} }
Arguing as in the preseding subsection we use \eqref{Strest1d-inh},  H\"{o}lder,  \eqref{keybiles12} and \eqref{Ctilde} to obtain
 \begin{align*}
\norm{   \mathcal B^\pm_{\mu} (u,v)  }_{X^s_T} ^2 
& \lesssim T \sum_{j=0}^2   \mathcal J^{(j)}_{\mu, T},
 \end{align*}
 where 
\begin{equation}\label{Gammatilde}
 \mathcal J^{(j)}_{\mu, T}= \sum_{\lambda_0}  \angles{\lambda_0}^{2s} \left[  \sum_{\lambda_1, \lambda_2 \in \Lambda_j}    \tilde C_{\mu, T}(\bm\lambda)   a_{\lambda_1 } b_{\lambda_2 }    \right]^2,
 \end{equation}
 with $\widetilde C_{\mu, T}(\bm \lambda)$ as in \eqref{Ctilde}.
 
 By symmetry it suffices to estimate $\mathcal J^{(0)}_{\mu, T}$ and $\mathcal J^{(1)}_{\mu, T}$.
Now if $\bm \lambda \in \Lambda_{0} $ or  $\bm \lambda \in \Lambda_{1} $, then from \eqref{C}--\eqref{Ctilde} we have
$$
 \widetilde C_{\mu, T}(\bm\lambda) \lesssim   C_{\mu, T}(\bm \lambda),
$$
and hence 
$$
 \mathcal J^{(j)}_{\mu, T} \lesssim  \mathcal I^{(j)}_{\mu, T}\lesssim T^{2\alpha}  \mu^{-\frac 32+ \alpha}  \norm{u}^2_{  X^s_T }\norm{v}^2_{ X^s_T } \quad (j=0,1),
$$
where to get the second inequality we used \eqref{GammaEstj}.

\vspace{8mm}

\noindent \textbf{Acknowledgments}
The author would like to thank the anonymous referee for useful comments on an earlier version of this article.


\end{document}